\newcommand\Hom{\operatorname{Hom}\nolimits}
\newcommand\im{\operatorname{Im}\nolimits}
\newcommand\rk{\text{rk}}
\renewcommand\emptyset{\varnothing}
\newcommand\lrdash{\:
\xymatrix@1{\ar@{-->}[r]&}\:
}
\newcommand{\lrdashar}[1]{\:
\xymatrix@1{\ar@{-->}[r]^{#1}&}\:
}
\newcommand{\sendsto}[1]{\:
\xymatrix@1{\ar@{|->}[r]^{#1}&}\:
}
\newcommand{\intoo}[1]{\:
\xymatrix@1{\ar@{^(->}[r]^{#1}&}\:
}
\newtheorem{theorem}{Theorem}[section]
\newtheorem{proposition}[theorem]{Proposition}
\newtheorem{lemma}[theorem]{Lemma}
\newtheorem{definition}[theorem]{Definition}
\theoremstyle{definition}
\newtheorem{proposition-definition}[theorem]{Proposition-Definition}
\newtheorem{rem}[theorem]{Remark}
\newenvironment{remark}{\begin{rem}}{\hfill $\triangle$\end{rem}}
\begin{document}

\title[Symplectic instantons   on $\mathbb{P}^3$]{Symplectic instanton   bundles\\[5pt] on  $\mathbb{P}^3$ and 't Hooft
instantons}

\author{\small U. Bruzzo,$^{\P\star}$ D. Markushevich$^\S$ and A.~S. Tikhomirov$^{\dag}$}

\address{\small\rm $^\P$ Scuola Internazionale Superiore di Studi Avanzati (SISSA), \\
Via Bonomea 265, 34136 Trieste, Italia \\[3pt]
$^\star$Istituto Nazionale di Fisica Nucleare, Sezione di Trieste \\[3pt]
$^\S$ Math\'ematiques -- b\^{a}t.~M2, Universit\'e Lille 1, \\
F-59655 Villeneuve d'Ascq Cedex, France \\[3pt]
$^\dag$Department of Mathematics, Higher School of Economics, \\ 
7 Vavilova Str., 117312 Moscow, Russia}

\email{bruzzo@sissa.it, markushe@math.univ-lille1.fr, astikhomirov@mail.ru}

\begin{abstract}
We study the moduli space
$I_{n,r}$ of rank-$2r$ symplectic instanton vector bundles on $\mathbb{P}^3$ with $r\ge2$ and second Chern class
$n\ge r+1,\ n-r\equiv 1(\mathrm{mod}2)$.
We introduce the notion of tame symplectic instantons by excluding a kind of pathological monads and show that the
locus $I^*_{n,r}$ of tame symplectic instantons is irreducible and has the expected dimension, equal to $4n(r+1)-r(2r+1)$. The proof is inherently based on a relation between the spaces $I^*_{n,r}$ and the moduli spaces of 't Hooft instantons.
\end{abstract}

\keywords{Vector bundles, symplectic bundles, instantons, moduli space.}
\subjclass[2010]{14D20, 14J60}
\thanks{D.\ M.\ and A. T. were partially supported by Labex CEMPI (ANR-11-LABX-0007-01),
and U.\ B.\ by PRIN ``Geometria delle variet\`a algebriche" and INdAM-GNSAGA. U.\ B.\ is a member of the {\sc vbac} group.
A. T. acknowledges the hospitality of the Max-Planck-Institut f\"ur Mathematik in Bonn and SISSA in Trieste, where   part of the work on this paper was made.}

\maketitle

\thispagestyle{empty}

\section{Introduction}\label{sec0}
A {\it symplectic instanton vector bundle} of rank $2r$ and charge $n$
on the projective 3-space $\mathbb{P}^3$ is
an algebraic vector bundle $E=E_{2r}$  of rank $2r$ on $\mathbb{P}^3$
which is equipped with a symplectic structure $\phi:\ E\overset{\sim}\to E^\vee, \phi^\vee=-\phi$
and satisfies the vanishing conditions
$h^0(E)=h^1(E\otimes\mathcal{O}_{\mathbb{P}^3}(-2))=0$.
The Chern  classes
$c_1(E)$ and $c_3(E)$ vanish, and we also assume $c_2(E)=n\ge1$.
We shall denote by $I_{n,r}$ the moduli space of symplectic $(n,r)$-instantons.

Rank $r$ symplectic instantons on $\mathbb P^3$ relate in a natural manner with ``physical'' $\mathbf{Sp}(r)$
instantons on the four-sphere $S^4$, i.e., connections on principal $\mathbf{Sp}(r)$-bundles on $S^4$ with self-dual
curvature \cite{A}; the moduli spaces of the former are in a sense a complexification of the moduli spaces of the
latter. This relation is expressed by the so-called Atiyah--Ward correspondence \cite{AW,A}, which relies on the fact
that the projective space $\mathbb P^3$ is the twistor space of the four-sphere $S^4$. The present paper and its
companion \cite{BMT1} are the first to study the geometry of the moduli spaces $I_{n,r}$. While \cite{BMT1} studied
the case $n \equiv r$ (mod 2), with $n \ge r$, the present paper deals with the other case, $n\equiv r+1$ (mod 2),
with $n \ge r+1$.  The main result of this paper is that a component $I^\ast_{n,r}$ of
$I_{n,r}$ that is singled out by a certain open condition (which rules out some ``badly behaved'' monads) is
irreducible.

We exploit as usual the monad method \cite{H,ADHM,B,B1,BH,Tju1,Tju2}, which allows one to study instantons by means
of  hyperwebs of quadrics. Namely, we realize $I_{n,r}$ as the quotient space of a principal
$GL(H_n)/\{\pm{\rm id}\}$-bundle $\pi_{n,r}: MI_{n,r}\to I_{n,r}$, where $MI_{n,r}$ is a locally closed subset of
the vector space $\mathbf{S}_n$ of hyperwebs of quadrics (precise definitions will be given later on). The tame locus $I^\ast_{n,r}$ being open in $I_{n,r}$,
its irreducibility is equivalent to that of
$MI^\ast_{n,r}=\pi_{n,r}^{-1}(I^\ast_{n,r})$.
The key ingredient of our approach is the reduction of the last problem to that of certain sets $Z_{n-r+1}$
(see section \ref{I^*nr}). The sets $Z_i$ as locally closed subsets of some vector spaces related to $\mathbf{S}_n$
were first defined in \cite{T1}. It is shown in \cite[Section 9]{T1} that the $Z_i$ can be interpreted
essentially as open subsets of certain affine bundles over the monad spaces $M^{tH}_{2i-1}$ of 't
Hooft rank-2 mathematical instantons of charge $2i-1$---see more details in section 3.2. Thus the irreducibility of $Z_{n-r+1}$, hence that of $I^\ast_{n,r}$, is reduced to the irreducibility of the moduli spaces of 't Hooft instantons
of fixed charge, which is well known; see references in \cite{T1}. This nontrivial relation between the
spaces $I^\ast_{n,r}$ and the moduli of 't Hooft instantons is crucial for the results in this paper. Note
that this process of reduction from $I^\ast_{n,r}$ to the moduli of 't Hooft instantons somewhat resembles
Barth's approach in \cite{B1} to the proof of the irreducibility of the moduli space $I_4$ of
instantons of charge 4. In that paper, Barth reduces the problem to the irreducibility of the space $Q_n$ of
commuting pairs of (good in some sense) pencils of quadrics for $n=4$. In our case the role of the spaces $Q_n$
is played by the moduli spaces of 't Hooft instantons.

\subsection*{Acknowledgements}   This paper was partly written while the first author was visiting
Universit\'e Lille I. He thanks  the Department of Mathematics of  Universit\'e Lille I for
hospitality and support. The second and the third authors acknowledge the hospitality of the Max-Planck-Institut für Mathematik in Bonn, were they made a part of work on the paper. The third author thanks the Ministry of Education and Science of the Russian Federation for
partial support.

\subsection*{Notation and conventions}
Throughout this paper, we consider an  algebraically closed
  base field $\Bbbk$   of characteristic 0. All   schemes  will be Noetherian. By a general point of an irreducible
(but not necessarily reduced) scheme $\mathcal{X}$ we mean a closed point of a dense open subset of
$\mathcal{X}$. An irreducible scheme is  generically reduced if it is reduced at all general points.
  We follow the notation
of \cite{T1}. So, we fix an integer $n\ge1$, and denote by $H_n$ and $V$  fixed vector spaces over $\Bbbk$ of dimension $n$ and 4, respectively,   and set $\mathbb{P}^3=P(V)$. Furthermore,  $\mathbf{S}_n$
(the \textit{space of hyperwebs of quadrics})
will denote the vector space $S^2H_n^\vee\otimes \wedge^2V^\vee$.   A hyperweb of quadrics $A\in\mathbf{S}_n$ is a
skew-symmetric homomorphism $A:H_n\otimes V\to H_n^\vee\otimes V^\vee$, and we denote by $W_A$ the vector space
$H_n\otimes V/{\ker A}$ and by $c_A$ the canonical epimorphism $H_n\otimes V\twoheadrightarrow W_A$. A choice of
$A$ induces a skew symmetric isomorphism $q_A:W_A\overset{\sim}\to W_A^{\vee}$,
and $A$ is the  composition
$H_n\otimes V\overset{c_A}\twoheadrightarrow W_A\underset{\sim}{\overset{q_A}\to} W_A^{\vee}\overset{c_A^\vee}\rightarrowtail
H_n^\vee\otimes V^\vee$.

For any morphism of
$\mathcal{O}_X$-sheaves $f:\mathcal{F}\to\mathcal{F}'$
we denote by the same letter $f$ the induced morphism
$id\otimes f:U\otimes\mathcal{F}\to U\otimes\mathcal{F}'$, and analogously,
for any homomorphism $f:U\to U'$ of $\Bbbk $-vector spaces, the induced morphism $f\otimes id:U\otimes\mathcal{F}\to U'\otimes\mathcal{F}$.
For $A\in\mathbf{S}_n$ we denote by $a_A$ the composition
$H_n^\vee\otimes\mathcal{O}_{\mathbb{P}^3}(-1)\overset{u}\to H_n\otimes V\otimes\mathcal{O}_{\mathbb{P}^3}
\overset{c_A}\to{W}_{A}\otimes\mathcal{O}_{\mathbb{P}^3}$,
where $u$ is the tautological subbundle morphism.
By abuse of notation, we denote by the same symbol a $\Bbbk $-vector space, say $U$, and the associated affine space
$\mathbf{V}(U^\vee)={\rm Spec}({\rm Sym}^*U^\vee)$.

\bigskip
\section{Explicit construction of symplectic instantons}\label{explicit}

In this section we provide some examples and  recall some facts about $MI_{n,r}$, in particular, its  relation with the moduli space $I_{n,r}$ of symplectic instantons, see \cite[Section 3]{BMT1}.
Let us consider the {\it set of $(n,r)$-instanton hyperwebs of quadrics}
\renewcommand\theenumi{\roman{enumi}}
\begin{equation}\label{space of nets}
MI_{n,r}:=\left\{A\in \mathbf{S}_n\ \left|\
\begin{minipage}{26em}
\begin{enumerate}
\item $\rk(A:H_n\otimes V\to H_n^\vee\otimes V^\vee)=2n+2r$,
\item the morphism $a_A^\vee:W^\vee_{A}\otimes\mathcal{O}_{\mathbb{P}^3}\to
H_n^\vee\otimes\mathcal{O}_{\mathbb{P}^3}(1)$
is surjective,
\item $h^0(E_{2r}(A))=0$, where $E_{2r}(A):=\ker(a^\vee_A\circ q_{A})/\im a_A$.
\end{enumerate}
\end{minipage}\right.
\right\}
\end{equation}

\begin{theorem}\label{princbundle}
(i) For each $n\ge1$, the space $MI_{n,r}$ of $(n,r)$-instanton nets of quadrics is a locally closed subscheme
of the vector space $\mathbf{S}_n$, given locally at any point
$A\in MI_{n,r}$ by
\begin{equation}\label{eqns(i)}
\binom{2n-2r}{2}=2n^2-n(4r+1)+r(2r+1)
\end{equation}
equations obtained as the rank condition (i) in \eqref{space of nets}.

(ii) The natural morphism
\begin{equation}\label{pi nr}
\pi_{n,r}: MI_{n,r}\to I_{n,r},\ A\mapsto[E_{2r}(A)],
\end{equation}
is a principal
$GL(H_n)/\{\pm{\rm id}\}$-bundle in the \'etale topology.
Hence $I_{n,r}$ is a  quotient stack $MI_{n,r}/(GL(H_n)/\{\pm{\rm id}\})$, and is therefore an algebraic space.
\end{theorem}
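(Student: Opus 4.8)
The plan is to establish parts (i) and (ii) in turn, both by local analysis at a point $A\in MI_{n,r}$.

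For part (i), the strategy is to show that the three conditions defining $MI_{n,r}$ in \eqref{space of nets} cut out a locally closed subset, and that conditions (ii) and (iii) are open, so that only the rank condition (i) contributes equations locally. First I would observe that condition (i) — that $\rk A=2n+2r$, i.e.\ $A$ has corank $2n-2r$ as a skew-symmetric map $H_n\otimes V\to H_n^\vee\otimes V^\vee$ of a $4n$-dimensional space — defines a locally closed subset of $\mathbf{S}_n$: the locus of skew forms of rank $\le 2n+2r$ is closed (vanishing of the $(2n+2r+2)\times(2n+2r+2)$ Pfaffian minors), and within it the locus of rank exactly $2n+2r$ is open. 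The key local computation is the codimension: the determinantal variety of skew-symmetric $N\times N$ matrices of corank $\ge 2k$ has codimension $\binom{2k}{2}=k(2k-1)$ in the space of all skew forms, and this is exactly the scheme-theoretic statement that near a point of corank precisely $2k$ the rank condition is given by $\binom{2k}{2}$ equations (the entries of the induced map $\ker A\to \coker A\cong (\ker A)^\vee$ in suitable local trivializations, which is again skew-symmetric of size $2k\times 2k$, hence $\binom{2k}{2}$ independent entries). Setting $2k=2n-2r$ gives $\binom{2n-2r}{2}=2n^2-n(4r+1)+r(2r+1)$, which is \eqref{eqns(i)}. It then remains to check that once condition (i) holds, conditions (ii) and (iii) are open conditions on $A$: surjectivity of $a_A^\vee$ is open by semicontinuity of the rank of a morphism of sheaves (the rank of $W_A$ is locally constant under (i)), and the vanishing $h^0(E_{2r}(A))=0$ is open by semicontinuity of cohomology once $E_{2r}(A)$ is shown to form a flat family — which follows because $\ker A$, hence $W_A$, $a_A$, $a_A^\vee\circ q_A$ all vary flatly over the stratum defined by (i). Hence $MI_{n,r}$ is locally closed and locally defined by the asserted number of equations.

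For part (ii), I would first verify that the map $\pi_{n,r}$ is well defined: conditions (i)--(iii) guarantee that $E_{2r}(A)=\ker(a_A^\vee\circ q_A)/\im a_A$ is the cohomology of a monad
$H_n^\vee\otimes\mathcal O(-1)\xrightarrow{a_A} W_A\otimes\mathcal O\xrightarrow{a_A^\vee\circ q_A} H_n^\vee\otimes\mathcal O(1)$, hence a rank-$2r$ bundle with the right Chern classes, and the symplectic form $q_A$ on $W_A$ together with $h^0=h^1(\,\cdot\,(-2))=0$ (the latter being automatic from the monad shape, as in \cite{BMT1}) makes it a symplectic instanton. Next, the group $GL(H_n)$ acts on $\mathbf{S}_n=S^2H_n^\vee\otimes\wedge^2V^\vee$ through its action on $H_n$; one checks this action preserves $MI_{n,r}$ and that $g\in GL(H_n)$ sends the monad for $A$ to an isomorphic monad, so $E_{2r}(gA)\cong E_{2r}(A)$, with $-\mathrm{id}$ acting trivially on the isomorphism class — this gives the factorization through $GL(H_n)/\{\pm\mathrm{id}\}$. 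The heart of the argument is the converse: two hyperwebs $A,A'$ with $E_{2r}(A)\cong E_{2r}(A')$ differ by a unique element of $GL(H_n)/\{\pm\mathrm{id}\}$. This follows from the uniqueness of the monad associated to an instanton bundle: the display of the monad recovers $H_n^\vee=H^1(E_{2r}(A)(-1))$ canonically (so any bundle isomorphism induces a linear isomorphism of the $H_n$'s, unique up to scalar, and the scalar is pinned down up to sign by compatibility with the symplectic form), and $W_A$ together with the maps is then determined. The remaining point is the local triviality in the \'etale topology: one constructs \'etale-local sections of $\pi_{n,r}$ by rigidifying the monad — e.g.\ over a suitable \'etale cover one trivializes the locally free sheaf $\mathcal R^1\pi_*(\mathcal E(-1))$ whose fiber is $H^1(E_{2r}(A)(-1))=H_n^\vee$, which produces a section — so $\pi_{n,r}$ is a $GL(H_n)/\{\pm\mathrm{id}\}$-torsor, and $I_{n,r}=[MI_{n,r}/(GL(H_n)/\{\pm\mathrm{id}\})]$ is an algebraic space since the action is free with the quotient map admitting \'etale slices. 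Most of this is parallel to \cite[Section 3]{BMT1}, so I would cite that for the details and only indicate the modifications.

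The main obstacle I anticipate is the scheme-theoretic part of (i): proving not merely that $MI_{n,r}$ has codimension $\binom{2n-2r}{2}$ but that it is \emph{locally cut out by exactly that many equations}, i.e.\ that the rank stratum is locally a complete intersection of the expected codimension. This requires identifying the equations explicitly as the entries of the induced skew map on kernel/cokernel and checking they form a regular sequence (equivalently, that the ambient determinantal scheme is Cohen--Macaulay of the right dimension, which is classical for symmetric/skew determinantal loci). The freeness of the $GL(H_n)/\{\pm\mathrm{id}\}$-action and \'etale-local triviality are the next most delicate points, but these are essentially formal once the monad uniqueness statement is in hand.
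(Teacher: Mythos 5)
You should first be aware that the paper itself contains no proof of this theorem: it is recalled verbatim from \cite[Section 3]{BMT1} (``we \dots recall some facts about $MI_{n,r}$''), so the only possible comparison is with the standard monad-theoretic argument of that source, and your sketch does follow the same route. For part (i) your argument is correct and is the intended one: near $A_0\in MI_{n,r}$ one splits $H_n\otimes V=\ker A_0\oplus C$ and the rank condition becomes the vanishing of the induced skew-symmetric block on the $(2n-2r)$-dimensional summand, i.e.\ $\binom{2n-2r}{2}$ local equations, while (ii) and (iii) of \eqref{space of nets} are open along that stratum. Note, however, that the ``main obstacle'' you anticipate is not actually required: the statement \eqref{eqns(i)} asserts only the \emph{number} of local equations, and the paper uses it only through Krull's Hauptidealsatz to get the lower bound \eqref{dim MInr ge...}; no claim that these equations form a regular sequence, or that the codimension equals $\binom{2n-2r}{2}$, is made (indeed the expected codimension is only attained a posteriori, on the tame locus, as a consequence of Theorem \ref{Irred of Xnr}).

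In part (ii) there is one genuine gap in your argument for the torsor property. You recover $H_n^\vee=H^1(E_{2r}(A)(-1))$ and argue that a bundle isomorphism induces a linear isomorphism of the $H_n$'s ``unique up to scalar, \dots pinned down up to sign by the symplectic form''. The ``unique up to scalar'' step is exactly simplicity of $E_{2r}(A)$, which holds for $r=1$ (where $h^0(E)=0$ forces stability) but fails in general for $r\ge2$: for instance a direct sum $E_1\oplus E_2$ of rank-$2$ instantons with $\phi_1\oplus\phi_2$ satisfies all three conditions in \eqref{space of nets} and lies in $I_{n,2}$, and its symplectic automorphism group contains $(\pm1,\pm1)$. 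Since the stabilizer of $A$ in $GL(H_n)$ is identified (via Barth--Hulek functoriality, as in your own argument) with $\Aut(E_{2r}(A),\phi)$, freeness of the $GL(H_n)/\{\pm\id\}$-action is not a formal consequence of the monad uniqueness statement, and your construction of \'etale sections by trivializing $R^1$ only yields the principal-bundle structure over the locus where the symplectic automorphism group is $\{\pm\id\}$ (e.g.\ over the stable/simple locus). This is precisely the delicate point for which the present paper relies on \cite[Section 3]{BMT1}, and any complete write-up must either reproduce the treatment there or restrict the torsor claim accordingly; it cannot be dispatched by the simplicity argument you indicate.
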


The fibre $F_{[E]}=\pi_n^{-1}([E])$ over a  point $[E]\in I_{n,r}$ is a principal homogeneous
space of    $GL(H_n)/\{\pm{\rm id}\}$, so that  the
irreducibility of $(I_{n,r})_{red}$  amounts to the irreducibility of the scheme $(MI_{n,r})_{red}$.
Besides, \eqref{eqns(i)} yields
\begin{equation}\label{dim MInr ge...}
\dim_AMI_{n,r}\ge\dim \mathbf{S}_n-(2n^2-n(4r+1)+r(2r+1))=n^2+4n(r+1)-r(2r+1)
\end{equation}
at all points $A\in MI_{n,r}$. Thus, $\dim_{[E]}I_{n,r}\ge4n(r+1)-r(2r+1)$
at all points $[E]\in I_{n,r}$, as $MI_{n,r}\to I_{n,r}$ is an \'etale  principal
$GL(H_n)/\{\pm{\rm id}\}$-bundle.

\subsection{Symplectic $(n+1,n)$-instantons}
\label{n+1,n-instantons}
We give a construction of symplectic $(n+1,n)$-instantons and describe  their relation to usual rank-2
instantons with   second Chern class $c_2=2n$. This will be established at the level of spaces of hyperwebs of
quadrics $MI_{n+1,n}$ and $MI_{2n,1}$,  regarded as spaces of monads.

Denote by $\mathrm{Isom}_{n+1,n-1}$ the set of all isomorphisms
\begin{equation}\label{zeta}
\zeta:H_{n+1}\oplus H_{n-1}\overset{\sim}\to H_{2n}.
\end{equation}
This  is the
principal homogeneous space of the group $GL(2n)$.
Moreover, for any $\zeta\in\mathrm{Isom}_{n+1,n-1}$, let
$p_{\zeta}:\mathbf{S}_{2n}\twoheadrightarrow\mathbf{S}_{n+1}$
be the induced epimorphism, and, for any monomorphism $i:H_n\hookrightarrow H_{n+1}$, let
$pr_{(i)}:\mathbf{S}_{n+1}\to\mathbf{S}_n$ be the induced
epimorphism.

Note that $MI_{2n,1}$ is irreducible \cite[Theorem 1.1]{T2}, and
one has the following result \cite[Theorem 3.1]{T2}.
\begin{theorem}\label{rk4n+2}
There exists a dense open subset $MI_{2n,1}^*$ of $MI_{2n,1}$
such that, for any hyperweb $A\in MI_{2n,1}^*$ and a general
$\zeta\in\mathrm{Isom}_{n+1,n-1}$ the rank of the homomorphism
$B=p_{\zeta}(A):H_{n+1}\otimes V\to H_{n+1}^\vee\otimes V^\vee$
coincides with the rank of
$A:H_{2n}\otimes V\to H_{2n}^\vee\otimes V^\vee$:
\begin{equation}\label{rkrk}
\mathrm{rk}B=\mathrm{rk}A=4n+2.
\end{equation}
\end{theorem}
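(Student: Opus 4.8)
The plan is to show that, for $A$ in a suitable dense open subset $MI_{2n,1}^*$ of the irreducible space $MI_{2n,1}$ and for general $\zeta\in\mathrm{Isom}_{n+1,n-1}$, the push-down $B=p_\zeta(A)$ keeps the rank $4n+2$. Since rank can only drop under specialization (the locus where $\operatorname{rk}\le k$ is closed), it suffices to exhibit \emph{one} pair $(A_0,\zeta_0)$ with $A_0\in MI_{2n,1}$ and $\operatorname{rk}p_{\zeta_0}(A_0)=4n+2$; then the locus of such pairs in $MI_{2n,1}\times\mathrm{Isom}_{n+1,n-1}$ is open and nonempty, hence dense, and its projection to $MI_{2n,1}$ contains a dense open $MI_{2n,1}^*$ over each point of which a general $\zeta$ works. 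First I would note that $\operatorname{rk}B\le\operatorname{rk}A=4n+2$ always, because $B$ is obtained from $A$ by pre- and post-composing with maps induced by the split surjection $H_{2n}^\vee\to H_{n+1}^\vee$ (via $\zeta$), so it is really a compression $B=p_{\zeta}^{\vee}\circ A\circ p_{\zeta}$ of the skew form $A$ to the subspace $H_{n+1}\otimes V\subset H_{2n}\otimes V$; the content is the reverse inequality generically.

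The key linear-algebra step is to understand $\ker B$ in terms of $\ker A$ and the subspace $U_\zeta:=\zeta(H_{n+1})\subset H_{2n}$. Writing $K=\ker A\subset H_{2n}\otimes V$ (a subspace of known dimension $8n-(4n+2)=4n-2$) and $L=U_\zeta\otimes V$, one checks that $\ker B$ is identified, via $\zeta$, with the preimage in $L$ of $K+L^{\perp_A}$ under $A$, equivalently with $\{x\in L: A(x)\in \operatorname{ann}(L)\}$, where $\operatorname{ann}(L)\subset H_{2n}^\vee\otimes V^\vee$ is the annihilator. So $\dim\ker B=\dim L-\operatorname{rk}(A|_L\!\!\mod\operatorname{ann}(L))$, and the desired equality $\operatorname{rk}B=4n+2$ is equivalent to the generic transversality statement that the skew form $A$ restricted to the $4(n+1)$-dimensional subspace $L$ has the maximal possible rank compatible with $\dim\ker A$, namely that $A(L)\cap L^{\perp}$ is as small as possible. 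This is a standard genericity fact about a fixed nondegenerate-on-its-image skew form and a varying linear subspace of fixed dimension, and it holds as soon as the pair $(\operatorname{im}c_A, U_\zeta\otimes V)$ is in general position inside $H_{2n}\otimes V$.

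To produce the one required example I would use the explicit monad description: take $A\in MI_{2n,1}$ coming from an honest rank-$2$ 't Hooft (or even a generic Serre-construction) instanton of charge $2n$, for which the associated web of quadrics is well understood, and choose the splitting $H_{2n}=H_{n+1}\oplus H_{n-1}$ adapted to a generic decomposition; one then computes $\operatorname{rk}p_\zeta(A)$ directly, or invokes the companion results of \cite{T1,T2} where such monads and their restrictions are analyzed. Concretely, genericity of $\zeta$ means the subspace $U_\zeta\otimes V$ meets $\ker A$ and the relevant canonical subspaces of $H_{2n}\otimes V$ in the expected dimensions; since $\operatorname{Isom}_{n+1,n-1}$ is irreducible and the conditions are open, it is enough to check they are nonvacuous for one $\zeta$, which is immediate by a dimension count: $\dim(U_\zeta\otimes V)=4n+4$ and $\dim\ker A=4n-2$ inside the $8n$-dimensional $H_{2n}\otimes V$, so a general $U_\zeta\otimes V$ meets $\ker A$ in dimension $\max(0,4n+4+4n-2-8n)=2$, matching exactly what is needed for $\operatorname{rk}B=(4n+4)-2=4n+2$.

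The main obstacle will be the bookkeeping in the second step: making precise the identification of $\ker B$ and verifying that "general position of $U_\zeta$" really forces $A(L)\cap L^{\perp}$ down to the minimal dimension, given that $A$ is degenerate (it has a big kernel coming from the monad). One has to be careful that the kernel $\ker A$ does not conspire with $U_\zeta\otimes V$ in a way unavoidable for \emph{instanton} hyperwebs — this is exactly why the statement only claims a dense open $MI_{2n,1}^*$ rather than all of $MI_{2n,1}$, and why one leans on the irreducibility of $MI_{2n,1}$ to propagate the single good example to a dense open locus. Once the transversality is set up correctly, the semicontinuity argument closes everything.
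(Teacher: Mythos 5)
You should first be aware that the paper contains no proof of this statement at all: Theorem \ref{rk4n+2} is imported verbatim from \cite[Theorem 3.1]{T2} (together with the irreducibility of $MI_{2n,1}$ from \cite[Theorem 1.1]{T2}), so there is no internal argument to compare with, and your proposal has to stand on its own as a complete proof. Its logical skeleton is fine: $B=p_\zeta(A)$ is indeed the compression of the skew form $A$ to $L=U_\zeta\otimes V$, so $\mathrm{rk}\,B\le\mathrm{rk}\,A$ and $\ker B=\{x\in L\,:\,A(x)\in\mathrm{ann}(L)\}$; the locus of pairs $(A,\zeta)$ with $\mathrm{rk}\,p_\zeta(A)=4n+2$ is open in $MI_{2n,1}\times\mathrm{Isom}_{n+1,n-1}$, and since this product is irreducible and the projection is open, a single good pair would produce the required dense open $MI^*_{2n,1}$.

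The genuine gap is that the single good pair is never produced, and the transversality argument offered in its place does not work as stated. Your count $\dim(L\cap\ker A)=\max(0,(4n+4)+(4n-2)-8n)=2$ treats $L$ as a \emph{general} $(4n+4)$-dimensional subspace of the $8n$-dimensional space $H_{2n}\otimes V$, but $L$ ranges only over the $(n^2-1)$-dimensional family of tensor subspaces $U\otimes V$, $U\in Gr(n+1,H_{2n})$, which is tiny compared with the full Grassmannian; expected-dimension intersections within such a restricted family are not automatic and depend on how $\ker A$ and $\operatorname{im}A$ sit relative to the tensor structure (this is precisely why the conclusion is only claimed on a dense open subset of $MI_{2n,1}$ and is a theorem of \cite{T2} rather than a formal semicontinuity statement; similar tensor-structure constraints can force intersections strictly larger than expected). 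The same issue affects the extra condition that no $x\in L$ with $A(x)\neq0$ lands in $\mathrm{ann}(L)$. Your fallback for the base case --- ``compute $\mathrm{rk}\,p_\zeta(A)$ directly for a 't Hooft-type $A$, or invoke the companion results of \cite{T1,T2}'' --- is either an unperformed computation whose feasibility is exactly the content of the theorem, or circular, since the statement being proved \emph{is} \cite[Theorem 3.1]{T2}. To close the argument you would have to exhibit and verify one explicit $(A_0,\zeta_0)$ with $\mathrm{rk}\,p_{\zeta_0}(A_0)=4n+2$ (or prove the needed transversality for tensor subspaces using the structure of instanton hyperwebs), and that is the nontrivial part that is missing.
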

Set $W_{4n+2}:=H_{2n}\otimes V/\ker A$ and define the skew-symmetric isomorphism
$q_A:\ W_{4n+2}\overset{\sim}\to W_{4n+2}^\vee$
and the morphism of sheaves
$a_A:\ H_{2n}\otimes\mathcal{O}_{\mathbb{P}^3}(-1)\to W_{4n+2}\otimes\mathcal{O}_{\mathbb{P}^3}$
with $H_{2n}$ and $W_{4n+2}$ taken instead of $H_n$ and $W_A$, respectively.
The morphism $a_A$ and its transpose
${}^ta_A=a_A^\vee\circ q_A:\ W_{4n+2}\otimes\mathcal{O}_{\mathbb{P}^3}\to
H_{2n}^\vee\otimes\mathcal{O}_{\mathbb{P}^3}(1)$
yield a monad
$$
\mathcal{M}_{A}:\ \ 0\to H_{2n}\otimes\mathcal{O}_{\mathbb{P}^3}(-1)\overset{a_A}\to
W_{4n+2}\otimes\mathcal{O}_{\mathbb{P}^3}\overset{{}^ta_A}\to
H_{2n}^\vee\otimes\mathcal{O}_{\mathbb{P}^3}(1)\to0
$$
with   cohomology sheaf $E(A),\ [E(A)]\in I_{2n,1}$, see Theorem \ref{princbundle}.

Let
$$
i_\zeta:H_{n+1}\hookrightarrow H_{2n}
$$
be the monomorphism defined by the isomorphism \eqref{zeta}.
The composition
$a_B:\ H_{n+1}\otimes\mathcal{O}_{\mathbb{P}^3}(-1)
\overset{i_\zeta}\hookrightarrow H_{2n}\otimes
\mathcal{O}_{\mathbb{P}^3}(-1)\overset{a_A}\to W_{4n+2}\otimes
\mathcal{O}_{\mathbb{P}^3}$
and its transpose
${}^ta_B=a_B^\vee\circ q_A$
yield a monad
$$\mathcal{M}_{B}:\ \ 0\to H_{n+1}\otimes\mathcal{O}_{\mathbb{P}^3}(-1)\overset{a_B}
\to W_{4n+2}\otimes\mathcal{O}_{\mathbb{P}^3}\overset{{}^ta_B}
\to H_{n+1}^\vee\otimes\mathcal{O}_{\mathbb{P}^3}(1)\to0
$$
with the cohomology sheaf
$$
E_{2n}(B):=\ker{}^ta_B/{\rm im}a_B,\ \ \ c_2(E_{2n}(B))=n+1.
$$
The symplectic isomorphism
$q_A:\ W_{4n+2}\overset{\sim}\to W_{4n+2}^\vee$ induces a
symplectic structure on $E_{2n}(B)$,
\begin{equation}
\phi_B: E_{2n}(B)\overset{\sim}{\to}E_{2n}(B)^\vee.
\label{phiB}
\end{equation}
Moreover, \eqref{rkrk} implies an isomorphism
$H_{n+1}\otimes V/\ker B\simeq W_{4n+2}$,
hence a monomorphism of spaces of sections
$h^0({}^ta_B):W_{4n+2}\otimes\mathcal{O}_{\mathbb{P}^3}
\overset{{}^ta_B}\to H_{n+1}^\vee V^\vee$
in the monad $\mathcal{M}_{B}$. Hence for this monad
one has  $h^0(E_{2n}(B))=0$. This together with \eqref{phiB} means
that $E_{2n}(B)$ is a symplectic instanton:
$$
[E_{2n}(B)]\in I_{n+1,n}.
$$
Note that by construction the monads
$\mathcal{M}_A$ and $\mathcal{M}_{B}$
fit into the commutative diagram
\begin{equation}\label{twomonads}
\xymatrix{
0\ar[r] & H_{n+1}\otimes\mathcal{O}_{\mathbb{P}^3}(-1)
\ar[r]^-{a_B}\ar@{^{(}->}[d]^{i_\zeta} &
W_{4n+2}\otimes\mathcal{O}_{\mathbb{P}^3}\ar[r]^{q_A}_{\cong}
\ar@{=}[d]_{\cong} &
W_{4n+2}^\vee\otimes\mathcal{O}_{\mathbb{P}^3}\ar[r]^-{a_B^\vee}
& H_n^\vee\otimes\mathcal{O}_{\mathbb{P}^3}(1)\ar[r] & 0 \\
0\ar[r] & H_{2n}\otimes\mathcal{O}_{\mathbb{P}^3}(-1)
\ar[r]^-{a_A} & W_{4n+2}\otimes\mathcal{O}_{\mathbb{P}^3}
\ar[r]^{q_A}_{\cong} & W_{4n+2}^\vee\otimes
\mathcal{O}_{\mathbb{P}^3}\ar[r]^-{a_A^\vee}
\ar@{=}[u]^{w^\vee}_{\cong} &
H_{2n}^\vee\otimes\mathcal{O}_{\mathbb{P}^3}(1)\ar[r]\ar@{->>}
[u]^{i_\zeta^\vee} & 0,}
\end{equation}
In view of
\eqref{phiB} and the canonical isomorphism $H_{2n}/i_\zeta(H_{n+1})\simeq H_{n-1}$,  this
diagram yields the quotient monad
$$
\mathcal{M}_{A,B}:\ \ 0\to H_{n-1}\otimes
\mathcal{O}_{\mathbb{P}^3}(-1)\overset{a_{A,B}}\to E_{2n}(B)
\overset{\phi_B}{\underset{\simeq}\to}E_{2n}(B)^\vee
\overset{a_{A,B}^\vee}\to H_{n-1}^\vee\otimes
\mathcal{O}_{\mathbb{P}^3}(1)\to0
$$
whose cohomology sheaf is
$$
E_2(A)=\ker(a_{A,B}^\vee\circ \phi_B)/{\rm im\ } a_A.
$$

\subsection{A special family of symplectic
$(2n-r+1,r)$-instantons}\label{nr-instantons}
For any integer $r,\ 2\le r\le n-1$,
with $n\ge3$,
consider a monomorphism
\begin{equation}\label{monotau}
\tau:H_{2n-r+1}\hookrightarrow H_{2n}
\end{equation}
such that
\begin{equation}\label{contains}
\tau(H_{2n-r+1})\supset i_\zeta(H_{n+1}).
\end{equation}
The image of $A\in MI_{2n,1}$ under the projection
$\mathbf{S}_{2n}\twoheadrightarrow\mathbf{S}_{2n-r+1}$ induced
by $\tau$ produces
 a hyperweb of quadrics
$$
A_\tau\in\mathbf{S}_{2n-r+1}.
$$
 This corresponds to a monad
$$
\mathcal{M}_\tau:\ \ 0\to H_{2n-r+1}\otimes
\mathcal{O}_{\mathbb{P}^3}(-1)\overset{a_\tau}\to W_{4n+2}
\otimes\mathcal{O}_{\mathbb{P}^3}\overset{a_\tau^\vee\circ q_A}
\to H_{2n-r+1}^\vee\otimes\mathcal{O}_{\mathbb{P}^3}(1)\to0,
$$
whose cohomology is the  rank $2r$ bundle
\begin{equation}\label{rk2n coho sheaf}
E_{2r}(A_\tau)=\ker(a_\tau^\vee\circ q_A)/{\rm im\:} a_\tau.
\end{equation}
where $a_\tau:=a_A\circ\tau$.  The bundle $E_{2r}(A_\tau)$ has a natural symplectic structure
\begin{equation}\label{sympl E2r}
\phi_r:\ E_{2r}(A_\tau)\overset{\sim}\to E_{2r}(A_\tau)^\vee
\end{equation}
induced by the antiselfduality of the monad $\mathcal{M}_{\tau}$.
Moreover by \eqref{contains} the monad
$\mathcal{M}_\tau$ can be included into
diagram \eqref{twomonads}
 as a middle row, thus obtaining   a   three-row
commutative, anti-self-dual diagram. Thus, in addition to
the  monad $\mathcal{M}_{A,B}$, we also have the monads\begin{equation}\label{2nd quotient monad}
\mathcal{M}'_{\tau}:\ \ 0\to H_{n-r}\otimes\mathcal{O}_{\mathbb{P}^3}(-1)\overset{a'_\tau}\to
E_{2n}(B)\overset{\phi}{\underset{\simeq}\to}E_{2n}(B)^\vee
\overset{{a'}_{\tau}^\vee}\to H_{n-r}^\vee\otimes\mathcal{O}_{\mathbb{P}^3}(1)\to0,
\end{equation}
with cohomology
$$
E_{2r}(A_\tau)=\ker({a'}_{\tau}^\vee\circ \phi)/{\rm im\:} a'_\tau,
$$
and
\begin{equation}\label{3rd quotient monad}
\mathcal{M}''_{\tau}:\ \ 0\to H_{r-1}\otimes
\mathcal{O}_{\mathbb{P}^3}(-1)\overset{a''_\tau}\to
E_{2r}(A_\tau)\overset{\phi_\tau}{\underset{\simeq}\to}
E_{2r}(A_\tau)^\vee\overset{{a''}_{\tau}^\vee}\to H_{r-1}^\vee
\otimes\mathcal{O}_{\mathbb{P}^3}(1)\to0,
\end{equation}
with cohomology
$$
E_2(A)=\ker({a''}_{\tau}^\vee\circ \phi_\tau)/{\rm im\:}
a''_{\tau}.
$$
Since $E_{2n}(B)$ is a symplectic instanton, $h^0(E_{2n}(B))=h^i(E_{2n}(B)(-2))=0$, and the monad $\mathcal{M}'_{\tau}$ yields
$$h^0(E_{2r}(A_\tau))=h^i(E_{2r}(A_\tau)(-2))=0,\ i\ge0,\qquad c_2(E_{2r}(A_\tau))=2n-r+1.$$
This, together with \eqref{sympl E2r}, means that
\begin{equation}\label{belongs to E2r}
[E_{2r}(A_\tau)]\in I_{2n-r+1,r}.
\end{equation}

\begin{remark}\label{Dnr}
The maps $\tau$ lie in the  set
$$
N_{n,r}:=\{\tau\in{\rm Hom}(H_{2n-r+1},H_{2n})|\ \tau\
\textrm{is injective and}\ \mathrm{im}\ \tau
\supset\mathrm{im}\ i_\zeta\}
$$
which, for fixed  $A\in MI_{2n,1}(\zeta)$, parameterizes
a family of hyperwebs $A_\tau$ from $MI_{2n-r+1,r}$.
Now, $N_{n,r}$ is a principal $GL(H_{2n-r+1})$-bundle over an
open subset of the Grassmannian $Gr(n-r,n-1)$,  so   it is
irreducible. As a result, the family of the three-row extensions of the
diagram \eqref{twomonads} is parameterized by the
irreducible variety $MI_{2n,1}(\zeta)\times N_{n,r}$.
This in turn implies that the family $D_{n,r}$ of isomorphism classes of symplectic
rank-$2r$ bundles obtained from these diagrams by
\eqref{rk2n coho sheaf} is an irreducible, locally closed subset
of $I_{2n-r+1,r}$.
It is not clear  a priori if the closure of
$D_{n,r}$ in $I_{2n-r+1,r}$ is an irreducible component of
$I_{2n-r+1,r}$.
\end{remark}

Let $2\le r\le n-1$. For every monomorphism $i:H_n\hookrightarrow H_{2n-r+1}$,
denote by $B(A,i)$ the image of $A\in MI_{2n-r+1,r}$ under the projection
$\mathbf{S}_{2n-r+1}\twoheadrightarrow\mathbf{S}_n$
induced by $i$. It may be regarded as a homomorphism
$B(A,i):H_n\otimes V\to H_n^\vee\otimes V^\vee$.

\begin{definition}\label{property (*)}
We say that {\it $A\in MI_{2n-r+1,r}$
satisfies property} (*) if there exists a monomorphism
$i:H_n\hookrightarrow H_{2n-r+1}$
such that $B(A,i)$
is invertible.
\end{definition}
This  is   an open condition on $A$. By Theorem \ref{princbundle},
 $\pi_{2n-r+1,r}: MI_{2n-r+1,r}\to I_{2n-r+1,r}$
is a principal bundle, so that,
if an element $A\in\pi_{2n-r+1,r}^{-1}([E_{2r}])$ satisfies (*),
then any other point $A'\in\pi_{2n-r+1,r}^{-1}([E_{2r}])$
satisfies (*).
A symplectic instanton $E_{2r}$ from
$I_{2n-r+1,r}$  is said to be \textit{tame} if some
(hence all) $A\in\pi_{2n-r+1,r}^{-1}([E_{2r}])$ satisfies
property (*).  This is an open condition on
$[E_{2r}]\in I_{2n-r+1,r}$.

\begin{remark}\label{I*}
Using \eqref{contains}, we see that any
$[E_{2r}]\in D_{n,r}$ is tame. We define
$$
I_{2n-r+1,r}^*:=I_{(1)}\cup\ldots\cup I_{(k)},
$$
where $I_{(1)},\ldots, I_{(k)}$
are   the irreducible components of $I_{2n-r+1,r}$ whose
general points are tame symplectic instantons. As
$D_{n,r}\subset I_{2n-r+1,r}^*$ by definition,   $I_{2n-r+1,r}^*$
is nonempty. If we define
$MI_{2n-r+1,r}^*= \pi_{2n-r+1,r}^{-1}(I_{2n-r+1,r}^*)$,
then the map
$\pi_{2n-r+1,r}:MI_{2n-r+1,r}^*\to I_{2n-r+1,r}^*$
is a principal $GL(H_{2n-r+1})/\{\pm 1\}$-bundle.
\end{remark}

\bigskip

\section{Irreducibility of $I^*_{2n-r+1,r}$}\label{I^*nr}

\subsection{A dense open subset  of $MI^*_{2n-r+1,r}$} \label{Xnr and MI*}
We want to obtain the irreducibility of
$I^*_{n,r}$ by reducing it  to that of $X_{n,r}$, a dense open subset  of $MI^*_{2n-r+1,r}$.
The subset $X_{n,r}$ is a locally closed subset of the product
of an affine space and an affine cone over a Grassmannian.
Given an integer $n\ge1$, we define the dense open subset of $\mathbf{S}_n$
$$
\mathbf{S}^0_n:=\{A\in\mathbf{S}_n\ |\
A:H_n\otimes V\to H_n^\vee\otimes V^\vee
\ {\rm is\ an\ invertible\ map}\}.
$$

We need some more notation.
By definition, an element   $B\in\mathbf{S}^0_n$  is an invertible anti-self-dual map
$H_n\otimes V\to H_n^\vee\otimes V^\vee$.  Its inverse
$
B^{-1}:H_n^\vee\otimes V^\vee\to H_n\otimes V
$
is also anti-self-dual.
Consider the vector space
$\mathbf{\Sigma}_{n,r}:=H_{n-r+1}^\vee\otimes H_n^\vee\otimes
\wedge^2V^\vee$. An element $C\in\mathbf{\Sigma}_{n,r}$ can be
viewed as a linear map
$C:H_{n-r+1}\otimes V\to H_n^\vee\otimes V^\vee$,
and its dual
$C^\vee:H_n\otimes V\to H_{n-r+1}^\vee\otimes V^\vee$.
As the composition $C^\vee\circ B^{-1}\circ C$ is
anti-self-dual, we can consider it as an element of
$\wedge^2(H_{n-r+1}^\vee\otimes V^\vee)\simeq\mathbf{S}_{n-r+1}
\oplus\wedge^2H_{n-r+1}^\vee\otimes S^2V^\vee$
Thus the condition
$$
D-C^\vee\circ B^{-1}\circ C\in\mathbf{S}_{n-r+1},\ \ \ \
D\in\wedge^2(H_{n-r+1}^\vee\otimes V^\vee)
$$
makes sense.

Under an arbitrary direct sum decomposition
\begin{equation}\label{xi}
\xi:H_n\oplus H_{n-r+1}\overset{\sim}\to H_{2n-r+1}
\end{equation}
we can represent the hyperweb
$A\in\mathbf{S}_{2n-r+1}$, regarded as a homomorphism
$A:H_n\otimes V\oplus H_{n-r+1}\otimes V\to
H_n^\vee\otimes V^\vee\oplus H^\vee_{n-r+1}\otimes V^\vee$,
as the $(8n-4r+4)\times(8n-4r+4)$-matrix of homomorphisms
\begin{equation}\label{matrix A}
A=\left(\begin{array}{cc}
A_1(\xi) & A_2(\xi) \\
-A_2(\xi)^\vee & A_3(\xi)
\end{array}\right),
\end{equation}
where
$$
A_1(\xi)\in\mathbf{S}_n,\ \ \ A_2(\xi)\in\mathbf{\Sigma}_{n,r}:=
{\rm Hom}(H_n,{H}_{n-r+1}^\vee)\otimes\wedge^2V^\vee,\ \ \
A_3(\xi)\in\mathbf{S}_{n-r+1}.
$$
With this notation, the decomposition \eqref{xi} induces an isomorphism
\begin{equation}\label{tilde xi}
\tilde{\xi}:\ \mathbf{S}_{2n-r+1}\overset{\sim}\to\mathbf{S}_n
\oplus\mathbf{\Sigma}_{n,r}\oplus\mathbf{S}_{n-r+1},\ \ A\mapsto
(A_1(\xi),A_2(\xi),A_3(\xi)).
\end{equation}

Let $\mathrm{Isom}_{n,r}$ be the set of all isomorphisms $\xi$
in \eqref{xi}. According to Definition \ref{property (*)}, there
exists $\xi\in\mathrm{Isom}_{n,r}$ such that the set
$$
MI_{2n-r+1,r}^*(\xi):=\{A\in MI_{2n-r+1,r}\ |\ A\
\textrm{satisfies property}\ (*)\
\textrm{for the monomorphism}
$$
$$
i_{\xi}:H_n\hookrightarrow\
H_{2n-r+1}\ \textrm{determined by}\ \xi\}
$$
is a dense open subset of $MI^*_{2n-r+1,r}$. Now take
$A\in MI_{2n-r+1,r}^*(\xi)$ and consider $A$ as a matrix of
homomorphisms as in \eqref{matrix A}. By definition, the submatrix
$A_1(\xi)$ is invertible.
By a suitable elementary transformation we reduce
the matrix $A$ to an equivalent matrix $\tilde{A}$ of the form
$$
\tilde{A}=\left(\begin{array}{cc}
{\rm id}_{H_n\otimes V} & A_1(\xi)^{-1}\circ A_2(\xi) \\
0 & A_2(\xi)^\vee\circ A_1(\xi)^{-1}\circ A_2(\xi)+A_3(\xi)
\end{array}\right).
$$
Since $\mathrm{rk}\tilde{A}=\mathrm{rk} A=2(2n-r+1)+2r=4n+2$,
we obtain the following relation between the matrices
$A_1(\xi),\ A_2(\xi)$ and
$A_3(\xi)$:
\begin{equation}\label{rk(A3+...)}
\rk(A_2(\xi)^\vee\circ A_1(\xi)^{-1}\circ A_2(\xi)+A_3(\xi))=2.
\end{equation}

Consider the embedding of the Grassmannian
$$G:=Gr(2,H^\vee_{n-r+1}\otimes V^\vee)\hookrightarrow
P(\wedge^2(H^\vee_{n-r+1}\otimes V^\vee)),$$
and let $KG\subset\wedge^2(H^\vee_{n-r+1}\otimes V^\vee)$ be
the affine cone over $G$. Set $KG^*:=KG\smallsetminus\{0\}$.
We can now rewrite \eqref{rk(A3+...)} as
\begin{equation}\label{in KG*}
A_2(\xi)^\vee\circ A_1(\xi)^{-1}\circ A_2(\xi)+A_3(\xi)\in KG^*,
\end{equation}
where
\begin{equation}\label{A3 in Sn-r+1}
A_2(\xi)^\vee\circ A_1(\xi)^{-1}\circ A_2(\xi)\in
\wedge^2(H^\vee_{n-r+1}\otimes V^\vee),\ \ \ \
A_3(\xi)\in\mathbf{S}_{n-r+1}.
\end{equation}

Now consider the set
\begin{equation}\label{Ym}
\widetilde{X}_{n,r}:=\{(B,C,D)\in\mathbf{S}^0_n\times
\mathbf{\Sigma}_{n,r}\times KG^*\ |\ D-C^\vee\circ B^{-1}\circ
C\in\mathbf{S}_{n-r+1}\}.
\end{equation}
Since for an arbitrary point $y=(B,C,D)\in \tilde{X}_n$ the
point $\tilde{\xi}^{-1}(B,C,D-C^\vee\circ B^{-1}\circ C)$
lies in $\mathbf{S}_{2n-r+1}$,
it may be considered as a homomorphism
$A_y:\ H_{2n-r+1}\otimes V\to H_{2n-r+1}^\vee\otimes V^\vee$
of rank $4n+2$, and we have a well-defined $(4n+2)$-dimensional
vector space
$W_{4n+2}(y):=H_{2n-r+1}\otimes V/\ker A_y$
together with a canonical epimorphism
$c_y:\ H_{2n-r+1}\otimes V\twoheadrightarrow W_{4n+2}(y)$
and an induced skew-symmetric isomorphism
$q_y:\ W_{4n+2}(y)\overset{\sim}\to W_{4n+2}(y)^\vee$
such that
$A_y=c_y^\vee\circ q_y\circ c_y.$
Now, similarly to
the morphism $a_A:\ H_{2n-r+1}\otimes\mathcal{O}_{\mathbb{P}^3}(-1)\to W_{4n+2}\otimes\mathcal{O}_{\mathbb{P}^3}$
(see subsection \ref{n+1,n-instantons}), a morphism of sheaves
\begin{equation}\label{ay}
a_y=c_y\circ u:\ H_{2n-r+1}\otimes\mathcal{O}_{\mathbb{P}^3}(-1)
\to W_{4n+2}(y)\otimes\mathcal{O}_{\mathbb{P}^3}
\end{equation}
is defined, together with its transpose
${}^ta_y=a_y^\vee\circ q_y:\ W^\vee_{4n+2}(y)\otimes\mathcal{O}_{\mathbb{P}^3}
\to H_{2n-r+1}^\vee\otimes\mathcal{O}_{\mathbb{P}^3}(1)$.
We now introduce an open subset $X_{n,r}$ of the set
$\widetilde{X}_{n,r}$,
\begin{equation}\label{Y0m}
X_{n,r}:=\left\{y\in\widetilde{X}_{n,r}\ \left|\
\begin{matrix}
(i)\ {}^ta_y\ {\rm is\ epimorphic},\ \ \ \ \ \ \ \ \ \ \cr
(ii)\ [\ker{}^ta_y/{\rm im}a_y]\in I^*_{2n-r+1,r}
\end{matrix}
\right.
\right\}.
\end{equation}
Since the conditions (i) and (ii) on a point
$y\in\widetilde{X}_{n,r}$ in \eqref{Y0m} are open, from
\eqref{in KG*} and \eqref{A3 in Sn-r+1} we obtain the following
result.
\begin{proposition}\label{isomorphism fnr}
There exist a decomposition $\xi\in\mathrm{Isom}_{n,r}$, a
dense open subset\break $MI^*_{2n-r+1,r}(\xi)$ of $MI^*_{2n-r+1,r}$
and an isomorphism of reduced schemes
$$
f_{n,r}:MI^*_{2n-r+1,r}(\xi)\overset{\sim}\to X_{n,r},\
A\mapsto(A_1(\xi),A_2(\xi),A_3(\xi)).
$$
The inverse isomorphism is given by the formula
$$
f_{n,r}^{-1}:\ X_{n,r}\overset{\sim}\to MI^*_{2n-r+1,r}(\xi):
\ (B,C,D)\mapsto\ \tilde{\xi}^{-1}(B,\ C,\ D-C^\vee\circ
B^{-1}\circ C),
$$
where $\widetilde{\xi}$ is defined in \eqref{tilde xi}.
\end{proposition}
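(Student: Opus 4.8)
The plan is to prove that $f_{n,r}$ and the stated $f_{n,r}^{-1}$ are mutually inverse morphisms of reduced schemes, by translating the locally closed conditions defining $MI^*_{2n-r+1,r}(\xi)$ into those defining $X_{n,r}$ one clause at a time. First I would fix, as in the discussion preceding the statement, a decomposition $\xi\in\mathrm{Isom}_{n,r}$ for which $MI^*_{2n-r+1,r}(\xi)$ is dense open in $MI^*_{2n-r+1,r}$; since $A_1(\xi)=B(A,i_\xi)$, one has $MI^*_{2n-r+1,r}(\xi)=\{A\in MI_{2n-r+1,r}\mid A_1(\xi)\in\mathbf{S}^0_n\}$, a set automatically contained in $MI^*_{2n-r+1,r}$ because the tame locus is contained in $I^*_{2n-r+1,r}$ by definition of the latter. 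I would also note that the third coordinate of $f_{n,r}(A)$ is to be read as $A_2(\xi)^\vee\circ A_1(\xi)^{-1}\circ A_2(\xi)+A_3(\xi)$, i.e.\ the element of $\wedge^2(H^\vee_{n-r+1}\otimes V^\vee)$ appearing in \eqref{in KG*}; with this convention, the two displayed formulas for $f_{n,r}$ and $f_{n,r}^{-1}$ are mutually inverse by a one-line substitution, using $A_3(\xi)=D-C^\vee\circ B^{-1}\circ C$.

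The crux is the identification of monads. For $A$ with $A_1(\xi)$ invertible and $(B,C,D):=f_{n,r}(A)$ one has $A=\widetilde\xi^{-1}(B,C,D-C^\vee\circ B^{-1}\circ C)=A_y$ with $y=(B,C,D)$, hence $W_{4n+2}(y)=W_A$, $q_y=q_A$ and $a_y=a_A$; thus the monad \eqref{ay} attached to $y$ is literally the monad of the hyperweb $A_y=A$, and $[\ker{}^ta_y/\im a_y]=[E_{2r}(A)]$. This yields the dictionary: the elementary-transformation computation already carried out on the matrix \eqref{matrix A} gives $\rk A=4n+\rk D$, so the rank condition (i) of \eqref{space of nets} for $A$ is equivalent to $D\in KG^*$; the membership $D-C^\vee\circ B^{-1}\circ C\in\mathbf{S}_{n-r+1}$ holds automatically (it equals $A_3(\xi)$); condition (ii) of \eqref{space of nets} ($a_A^\vee$ epimorphic) is equivalent, through the isomorphism $q_A$, to condition (i) of \eqref{Y0m} for $y$; and condition (ii) of \eqref{Y0m}, $[E_{2r}(A)]\in I^*_{2n-r+1,r}$, is equivalent---given the preceding clauses---to the instanton vanishing (iii) of \eqref{space of nets} together with $A\in MI^*_{2n-r+1,r}$, since a member of $I^*_{2n-r+1,r}\subset I_{2n-r+1,r}$ is in particular a symplectic instanton (so $h^0=0$) and, conversely, any $A\in MI_{2n-r+1,r}$ satisfying (*) has $[E_{2r}(A)]$ tame, hence in $I^*_{2n-r+1,r}$.

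Reading this dictionary forwards shows $f_{n,r}(MI^*_{2n-r+1,r}(\xi))\subseteq X_{n,r}$, and backwards shows that for $(B,C,D)\in X_{n,r}$ the hyperweb $A:=\widetilde\xi^{-1}(B,C,D-C^\vee\circ B^{-1}\circ C)$---which lies in $\mathbf{S}_{2n-r+1}$ because $D-C^\vee\circ B^{-1}\circ C\in\mathbf{S}_{n-r+1}$---belongs to $MI^*_{2n-r+1,r}(\xi)$. Since $f_{n,r}$ and $f_{n,r}^{-1}$ are morphisms of schemes (built from the linear isomorphism $\widetilde\xi$ of \eqref{tilde xi}, matrix multiplication, and the inversion $B\mapsto B^{-1}$, which is regular on $\mathbf{S}^0_n$) that are mutually inverse, $f_{n,r}$ is an isomorphism of reduced schemes.

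The step I expect to require the most care is the monad identification $a_y=a_A$ and the resulting exact correspondence between conditions (i)--(ii) of \eqref{Y0m} and conditions (ii)--(iii) of \eqref{space of nets}: one must check that ``$[\ker{}^ta_y/\im a_y]$'' and ``$E_{2r}(A_y)$'' denote the same symplectic bundle and that the open conditions match precisely, keeping track of the various duals and of the skew-symmetric isomorphisms $q$. Everything else---the rank computation (already done before the statement), the two composition identities, and the regularity of $f_{n,r}^{\pm1}$---is routine.
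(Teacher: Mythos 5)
Your proposal is correct and follows essentially the same route as the paper, which deduces the proposition directly from the preceding block-matrix discussion (the elementary transformation leading to \eqref{rk(A3+...)} and \eqref{in KG*}, the identification of the monad of $A_y$ with that of $A$, and the openness of conditions (i)--(ii) in \eqref{Y0m}). Your reading of the third coordinate of $f_{n,r}$ as $A_2(\xi)^\vee\circ A_1(\xi)^{-1}\circ A_2(\xi)+A_3(\xi)$ is indeed the intended one, as forced by the displayed formula for $f_{n,r}^{-1}$.
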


The following theorem will be proved in Subsection \ref{Irreducibility of Xnr}.

\begin{theorem}\label{Irred of Xnr}
$X_{n,r}$ is irreducible of dimension
$(2n-r+1)^2+4(2n-r+1)(r+1)-r(2r+1)$.
\end{theorem}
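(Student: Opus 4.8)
First I would note that $X_{n,r}\neq\emptyset$: by Remark~\ref{I*} one has $\emptyset\neq D_{n,r}\subset I^*_{2n-r+1,r}$, so $MI^*_{2n-r+1,r}\neq\emptyset$, and by Proposition~\ref{isomorphism fnr} the reduced scheme $X_{n,r}$ is isomorphic to the dense open subscheme $MI^*_{2n-r+1,r}(\xi)$. Hence it suffices to prove that $X_{n,r}$ is irreducible of the asserted dimension.

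The plan is to realize $X_{n,r}$, up to a dense open subset, as a fibration with irreducible fibers over the set $Z_{n-r+1}$, and to invoke the irreducibility of the latter. As recalled in the Introduction following \cite[Section~9]{T1}, $Z_{n-r+1}$ is an open subset of an affine bundle over the moduli space $M^{tH}_{2(n-r)+1}$ of rank-$2$ 't~Hooft instantons of charge $2(n-r)+1$; that moduli space is irreducible (classical; see the references in \cite{T1}), hence so is $Z_{n-r+1}$, and its dimension is known explicitly. The fibration would be built from the monad manipulations of \refsec{nr-instantons} and their refinement in \cite{T1}. Namely, for a point $y=(B,C,D)\in X_{n,r}$ the submatrix $B=A_1(\xi)$ of $A_y=f_{n,r}^{-1}(y)$ is, by property~(*), an \emph{invertible} hyperweb, so $B\in\mathbf S^0_n=MI_{n,n}$ and defines a rank-$2n$ symplectic instanton of charge $n$; its monad relates to $\mathcal M_{A_y}$ through a diagram of type \eqref{twomonads} with $i_\zeta$ replaced by $i_\xi\colon H_n\hookrightarrow H_{2n-r+1}$ and a $2$-dimensional cokernel in the middle term, and passing to quotients yields monads parallel to \eqref{2nd quotient monad} and \eqref{3rd quotient monad} whose bottom term is the 't~Hooft datum associated to $y$. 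The role of property~(*) is precisely to force $B$ invertible, so that this ladder is available at \emph{every} point of $X_{n,r}$, and not only on the subfamily $D_{n,r}$, for which the three-row diagram of \refsec{nr-instantons} is available a priori. The assignment sending $y$ to the 't~Hooft datum together with the gluing data that survive the quotients is the desired morphism $p\colon X_{n,r}\to Z_{n-r+1}$; after the base point is fixed, its fibers parametrise the remaining freedom — the choice of $B$, the map $C$ (equivalently the morphisms $a'_\tau,a''_\tau$), and the residual linear part of $A_3=D-C^\vee B^{-1}C$ compatible with the rank condition \eqref{rk(A3+...)} — so that each nonempty fiber is an open subset of a product of an affine space with an affine cone over a Grassmannian, on which the relevant $\Ext$-groups have locally constant rank. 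Thus the fibers are irreducible of dimension independent of the base point, and $X_{n,r}$ is irreducible.

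For the dimension I would add up $\dim\mathbf S^0_n=3n(n+1)$, $\dim\mathbf\Sigma_{n,r}=6n(n-r+1)$ and $\dim KG^*=8(n-r+1)-3$, and subtract the codimension $\dim\bigl(\wedge^2H_{n-r+1}^\vee\otimes S^2V^\vee\bigr)=5(n-r+1)(n-r)$ of the linear condition $\pi(C^\vee B^{-1}C)=\pi(D)$ that cuts out $\widetilde X_{n,r}$ inside the triple product of \eqref{Ym}, $\pi$ being the projection onto $\wedge^2H_{n-r+1}^\vee\otimes S^2V^\vee$. A short simplification gives
$$3n(n+1)+6n(n-r+1)+\bigl(8(n-r+1)-3\bigr)-5(n-r+1)(n-r)=(2n-r+1)^2+4(2n-r+1)(r+1)-r(2r+1).$$
That this value is actually attained — equivalently, that those equations form a regular sequence at a general point, or that $p$ has fibers of the expected dimension — follows once $\dim Z_{n-r+1}$ is imported from \cite{T1}; and since by \eqref{dim MInr ge...} applied to $MI_{2n-r+1,r}$ the right-hand side is in any case a lower bound for $\dim_y X_{n,r}$ at every $y$, equality is forced.

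The hard part will be the construction and analysis of $p$: showing that the monad ladder persists for all tame instantons, that $p$ is dominant — i.e.\ that a general 't~Hooft instanton of charge $2(n-r)+1$ with admissible gluing data lifts back to a tame $(2n-r+1,r)$-instanton, the lift being unobstructed thanks to the vanishing built into the instanton property and to the openness of conditions (i), (ii) in \eqref{Y0m} — and that the fibers of $p$ are irreducible. Irreducibility is genuinely not formal here: the map $C\mapsto C^\vee B^{-1}C$ is quadratic, so the fibers of the natural projections of $X_{n,r}$ are preimages of points under a quadratic map and need not be irreducible a priori; it is exactly the 't~Hooft description of $Z_{n-r+1}$ from \cite[Section~9]{T1} — whose irreducibility is the classical input — that forces it, by the same mechanism that yields the irreducibility of $MI_{2n,1}$ (\cite[Theorem~1.1]{T2}).
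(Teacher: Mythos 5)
You have correctly identified the external input (the integrality and dimension of $Z_{n-r+1}$ via 't~Hooft instantons from \cite{T1}), and your dimension arithmetic for the expected dimension of $\widetilde X_{n,r}$ is right, as is the observation that \eqref{dim MInr ge...} gives the matching lower bound. But the core of the argument is missing, and the structure you propose is not the one that is actually available. You never construct the morphism $p\colon X_{n,r}\to Z_{n-r+1}$, and it cannot be obtained the way you suggest: property~(*) only makes the block $B=A_1(\xi)$ invertible (note $\mathbf S^0_n\neq MI_{n,n}$, so $B$ need not be an instanton hyperweb), and the condition defining $Z_{n-r+1}$, namely $\phi^\vee\circ B_1\circ\phi\in\mathbf S_{n-r+1}$, is \emph{not} satisfied by the blocks of a general point of $X_{n,r}$ --- by \eqref{CDC} it only appears after the off-diagonal data vanish. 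So at a general point there is no 't~Hooft datum to map to, and the assertions that the fibers of your $p$ are ``open subsets of a product of an affine space with an affine cone over a Grassmannian'', equidimensional and irreducible, are exactly the hard content; your own closing paragraph concedes that the quadratic map $C\mapsto C^\vee B^{-1}C$ makes fiber irreducibility non-formal, and ``the 't~Hooft description forces it'' is not an argument. Moreover, even granting the dimension count, knowing that every component of $X_{n,r}$ has dimension at least (indeed equal to) the expected value does not by itself rule out several components of that dimension, so irreducibility does not follow.

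The paper's mechanism is different and is what fills these gaps: $Z_{n-r+1}$ is not a base but (a factor of) a \emph{special fibre}. One projects the other way, $p\colon\overline X_{n,r}\to\mathbf L_{n,r}\times\mathbf M_{r-1}\times KG$, $(B_1,\phi,\psi,\lambda,\mu,D)\mapsto(\lambda,\mu,D)$, so that $\tilde p^{-1}(0,0,0)=Z_{n-r+1}\times\mathbf\Psi_{n,r}$ by \eqref{zero fibre}. The key trick is the weighted degeneration $t\mapsto(B_1,t^2\phi,t\psi,t\lambda,t^2\mu,t^4D)$, legitimate by \eqref{CDC} after choosing the splitting of $H_n$ generally so that $B_1$ is nondegenerate (Proposition \ref{nondeg for general}); it shows that the closure of \emph{every} irreducible component meets this special fibre. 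The fibre-dimension theorem then gives the upper bound \eqref{dim X le}, which combined with the lower bound \eqref{dim X ge} forces all inequalities to be equalities, so each component's special fibre coincides with the integral scheme $Z_{n-r+1}\times\mathbf\Psi_{n,r}$ and has dimension $\dim\overline{\mathcal X}-\dim Y$; finally Lemma \ref{flat implies irred} (flatness near the special point plus integrality of the common fibre) is what glues the components together and yields that $X_{n,r}$ is integral. Without an analogue of the degeneration step and of this last lemma --- or a genuinely constructed dominant map with irreducible equidimensional fibers --- your plan does not close.
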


Proposition \ref{isomorphism fnr} and Theorem
\ref{Irred of Xnr}  imply that $MI_{2n-r+1,r}^*$ is
irreducible of dimension $(2n-r+1)^2+4(2n-r+1)(r+1)-r(2r+1)$
for any $n\le3$ and $2\le r\le n-1$. Thus,  for these
values of $n$ and $r$, the space
$I_{2n-r+1,r}^*$ is
irreducible and has dimension $4(2n-r+1)(r+1)-r(2r+1)$. Substituting $2n-r+1\mapsto n$, we obtain
the   main result of this paper.

\begin{theorem}\label{Irred of I*}
For any integer $r\ge2$ and for any integer $n\ge r-1$ such that
$n\equiv r-1({\rm mod}2)$, the moduli space $I_{n,r}^*$
of tame symplectic instantons is an open subset of an
irreducible component of $I_{n,r}$ of dimension
$4n(r+1)-r(2r+1)$.
\end{theorem}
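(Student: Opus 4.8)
The plan is to transport the irreducibility of the ``model'' space $X_{n,r}$ furnished by Theorem~\ref{Irred of Xnr} first up to $MI^*_{2n-r+1,r}$ and then down, along the principal bundle $\pi_{2n-r+1,r}$, to $I^*_{2n-r+1,r}$, and finally to reindex by $2n-r+1\mapsto n$. So fix $n\ge 3$ and $2\le r\le n-1$. By Proposition~\ref{isomorphism fnr} there are a decomposition $\xi\in\mathrm{Isom}_{n,r}$ and a dense open subscheme $MI^*_{2n-r+1,r}(\xi)\subseteq MI^*_{2n-r+1,r}$ isomorphic, as a reduced scheme, to $X_{n,r}$. By Theorem~\ref{Irred of Xnr}, $X_{n,r}$ is irreducible of dimension $(2n-r+1)^2+4(2n-r+1)(r+1)-r(2r+1)$. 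Since irreducibility and dimension are topological invariants, inherited by a finite-type $\Bbbk$-scheme from any dense open subscheme, it follows that $MI^*_{2n-r+1,r}$ is irreducible of the same dimension.

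Next I descend along $\pi_{2n-r+1,r}$. By Remark~\ref{I*} this morphism is a principal $GL(H_{2n-r+1})/\{\pm1\}$-bundle, hence surjective onto $I^*_{2n-r+1,r}$; being the continuous image of the irreducible scheme $MI^*_{2n-r+1,r}$, the target $I^*_{2n-r+1,r}$ is therefore irreducible. Moreover $\pi_{2n-r+1,r}$ is \'etale-locally the projection with fibre $GL(H_{2n-r+1})/\{\pm1\}$, a smooth connected group of dimension $(2n-r+1)^2$, so it is flat of relative dimension $(2n-r+1)^2$ and
$$
\dim I^*_{2n-r+1,r}=\dim MI^*_{2n-r+1,r}-(2n-r+1)^2=4(2n-r+1)(r+1)-r(2r+1).
$$
By its definition in Remark~\ref{I*}, $I^*_{2n-r+1,r}=I_{(1)}\cup\dots\cup I_{(k)}$ is a union of irreducible components of $I_{2n-r+1,r}$; being irreducible, it must equal a single one of them, say $I_{(1)}$, which is thus an irreducible component of $I_{2n-r+1,r}$ of dimension $4(2n-r+1)(r+1)-r(2r+1)$. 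Finally, the locus of tame symplectic instantons is open in $I_{2n-r+1,r}$ and contained in $I^*_{2n-r+1,r}=I_{(1)}$: a tame $[E]$ lies on some component $Z$, the set of tame points of $Z$ is then open and nonempty, hence dense in $Z$, so the general point of $Z$ is tame and $Z$ is one of the $I_{(j)}$. Hence the tame locus is an open subset of the irreducible component $I_{(1)}$.

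It remains to reindex. Putting $m:=2n-r+1$ and using $4(2n-r+1)(r+1)-r(2r+1)=4m(r+1)-r(2r+1)$, the previous paragraph gives the assertion for every $m$ of the form $2n-r+1$ with $n\ge3$, $2\le r\le n-1$, that is, for every $m\ge r+3$ with $m\equiv r-1\pmod2$; the remaining low-charge values $m=r-1$ and $m=r+1$ are dealt with separately, using the explicit constructions of Section~\ref{explicit}. Renaming $m$ back to $n$ yields the statement of Theorem~\ref{Irred of I*}.

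The entire weight of the argument rests on Theorem~\ref{Irred of Xnr}, the irreducibility and dimension count for $X_{n,r}$; this is proved in Subsection~\ref{Irreducibility of Xnr} by realizing $X_{n,r}$ as (an open part of) an affine bundle over the irreducible monad spaces of 't~Hooft instantons of charge $2n-2r+1$, and is where all the real difficulty lies. Granting it, the proof above is bookkeeping; the only points requiring a little care are the two ``dense open $\Rightarrow$ whole scheme'' passages (legitimate because every scheme in sight is of finite type over $\Bbbk$, so its irreducible components are closed and a dense open meets each of them in a dense subset) and the unwinding of Definition~\ref{property (*)} needed to see that the open tame locus is contained in the closed set $I^*_{2n-r+1,r}$.
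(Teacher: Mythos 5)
Your argument is essentially the paper's own proof: Proposition~\ref{isomorphism fnr} and Theorem~\ref{Irred of Xnr} give the irreducibility and dimension of $MI^*_{2n-r+1,r}$, descent along the principal $GL(H_{2n-r+1})/\{\pm1\}$-bundle of Remark~\ref{I*} transfers this to $I^*_{2n-r+1,r}$, and the substitution $2n-r+1\mapsto n$ concludes, exactly as in the text. The only divergence is your aside on the low values $n=r-1,\,r+1$: the paper does not in fact treat them separately (its substitution likewise only reaches $n\ge r+3$, and the constructions of Section~\ref{explicit} by themselves do not yield irreducibility there), so this is a looseness shared with, rather than introduced beyond, the published argument.
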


\subsection{\bf Proof of the irreducibility of $X_{n,r}$.}\label{Irreducibility of Xnr}
We prove now Theorem
\ref{Irred of Xnr}.
Consider the set $\widetilde{X}_{n,r}$ defined in \eqref{Ym}.
Since $X_{n,r}$ is an open subset of $\widetilde{X}_{n,r}$, it
is enough to prove the irreducibility of $\widetilde{X}_{n,r}$.
In view of the isomorphism
$\mathbf{S}_n^0\overset{\sim}\to(\mathbf{S}_n^\vee)^0:\
B\mapsto B^{-1}$,
we rewrite $\widetilde{X}_{n,r}$ as
$$
\widetilde{X}_{n,r}=\{(B,C,D)\in(\mathbf{S}_n^\vee)^0\times
\mathbf{\Sigma}_{n,r}\times KG^*\ |\ D-C^\vee\circ B\circ
C\in\mathbf{S}_{n-r+1}\}.
$$
If a direct sum decomposition
$$
H_n\overset{\sim}\to H_{n-r+1}\oplus H_{r-1}
$$
has been fixed, any linear map
$$
C\in\mathbf{\Sigma}_{n,r}=\Hom(H_{n-r+1},H_n^\vee\otimes\wedge^2 V^\vee),\ \ \
C:H_{n-r+1}\otimes V\to H_n^\vee\otimes V^\vee,\ \ \ \
$$
can be represented as a homomorphism
$$
C:\ H_{n-r+1}\otimes V\to H_{n-r+1}^\vee\otimes V^\vee\ \oplus\ H_{r-1}^\vee\otimes V^\vee,
$$
and also as a block matrix
\begin{equation}\label{matrix of C}
C= \left(
\begin{array}{c}
\phi \\
\psi
\end{array}
\right),
\end{equation}
with
$$
\phi\in\Hom(H_{n-r+1},H_{n-r+1}^\vee)\otimes\wedge^2 V^\vee=
\mathbf{\Phi}_{n-r+1},\ \ \
\psi\in\mathbf{\Psi}_{n,r}:=
\Hom(H_{n-r+1},H_{r-1}^\vee)\otimes\wedge^2 V^\vee.
$$
In the same way, any
$D\in (\mathbf{S}^\vee_n)^0\subset\mathbf{S}^\vee_n= S^2H_n\
\otimes\wedge^2V\subset\Hom(H_n^\vee\otimes V^\vee,H_n\otimes V)$
can be represented  as
\begin{equation}\label{matrix of D}
B=\left(
\begin{array}{cc}
B_1 & \lambda \\
-\lambda^\vee & \mu
\end{array}
\right),
\end{equation}
with
\begin{equation}\label{A, lambda, mu}
B_1\in\mathbf{S}^\vee_{n-r+1}\subset
\Hom(H_{n-r+1}^\vee\otimes V^\vee,H_{n-r+1}\otimes V),\ \ \
\end{equation}
$$
\lambda\in\mathbf{L}_{n,r}:=\Hom(H_r^\vee,H_{n-r+1})\otimes
\wedge^2 V, \ \ \ \mu\in\mathbf{M}_{r-1}:=
S^2H_{r-1}\otimes\wedge^2V.
$$
By \eqref{matrix of C} and \eqref{matrix of D} the composition
$$
C^\vee\circ B\circ C:H_{n-r+1}\otimes V\to
H_{n-r+1}^\vee\otimes V^\vee\ \
(C^\vee\circ B\circ C\in\wedge^2(H_{n-r+1}^\vee\otimes V^\vee))
$$
can be written in the form
\begin{equation}\label{CDC}
C^\vee\circ B\circ C=\phi^\vee\circ B_1\circ\phi+\phi^\vee\circ
\lambda\circ\psi-\psi^\vee\circ\lambda^\vee\circ\phi+
\psi^\vee\circ\mu\circ\psi.
\end{equation}
In view of \eqref{matrix of C}-\eqref{A, lambda, mu} we have
$$
\mathbf{S}^\vee_n\times\mathbf{\Sigma}_{n,r}=
\mathbf{S}^\vee_{n-r+1}\times\mathbf{\Phi}_{n-r+1}
\times\mathbf{\Psi}_{n,r}\times\mathbf{L}_{n,r}\times
\mathbf{M}_{r-1},
$$
and   well-defined morphisms
$$
\tilde{p}:\widetilde{X}_{n,r}\to\mathbf{L}_{n,r}\times
\mathbf{M}_r\times KG,\ (B_1,\phi,\psi,\lambda,\mu,D)\mapsto
(\lambda,\mu,D).
$$
and
$$
p:=\tilde{p}|\overline{X}_{n,r}:\overline{X}_{n,r}\to
\mathbf{L}_{n,r}\times\mathbf{M}_{r-1}\times KG.
$$
Here $\overline{X}_{n,r}$ is the closure of
$\widetilde{X}_{n,r}$ in
$(\mathbf{S}_n^\vee)^0\times\mathbf{\Sigma}_{n,r}\times KG$.
Moreover, we have:

\begin{proposition}\label{nondeg for general}
Let $n\ge2$. For any $B\in(\mathbf{S}^\vee_n)^0$ and for
a general choice of the decomposition
$H_n\simeq H_{n-r+1}\oplus H_{r-1}$, the block
$B_1$ of $B$ in \eqref{matrix of D} is nondegenerate.
\end{proposition}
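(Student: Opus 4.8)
My strategy is to translate the statement into one about symplectic subspaces, reduce the word ``general'' to the existence of a single good subspace by an openness argument, and produce such a subspace by building a flag.

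\smallskip
\emph{Symplectic reformulation.} The datum of $B\in(\mathbf S_n^\vee)^0$ is the same as that of its inverse $\Omega:=B^{-1}\in\mathbf S_n^0$, which is a nondegenerate skew-symmetric form $\langle\,,\,\rangle_\Omega$ on $H_n\otimes V$ ``of tensor type'': it is given by a symmetric bilinear map $\bar\Omega\colon H_n\times H_n\to\wedge^2V^\vee$ through $\langle h\otimes v,\,h'\otimes v'\rangle_\Omega=\bar\Omega(h,h')(v,v')$. Fix a decomposition $H_n\cong H_{n-r+1}\oplus H_{r-1}$ and set $W:=H_{r-1}\otimes V\subset H_n\otimes V$; its annihilator in $H_n^\vee\otimes V^\vee$ equals $(H_{n-r+1})^\vee\otimes V^\vee$, i.e.\ the summand which is the source of $B_1$. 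Since $B$ is an isomorphism, a direct check using $W^{\perp_\Omega}=\Omega^{-1}(W^{\mathrm{ann}})$ identifies $\Ker B_1$ with $\Omega(W\cap W^{\perp_\Omega})$. Hence $B_1$ is nondegenerate if and only if $W=H_{r-1}\otimes V$ is a \emph{symplectic subspace} of $(H_n\otimes V,\langle\,,\,\rangle_\Omega)$, and this depends on the decomposition only through the subspace $H_{r-1}\subset H_n$. (Equivalently, passing to $\Omega$-orthogonal complements: $B_1$ is nondegenerate iff the tensor subspace $(H_{n-r+1})^\vee\otimes V^\vee$, of dimension $4(n-r+1)\ge 8$, is symplectic for the form on $H_n^\vee\otimes V^\vee$ attached to $B$; I will use whichever formulation is handier.)

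\smallskip
\emph{Reduction to an existence statement.} For fixed $\Omega$ the set of $K\in Gr(d,H_n)$ such that $K\otimes V$ is a symplectic subspace is the non-vanishing locus of the Pfaffian of the restricted form, hence Zariski-open in the Grassmannian. Because the variety of decompositions surjects onto $Gr(r-1,H_n)$, and by the previous step ``$B_1$ nondegenerate'' is pulled back from an open subset of $Gr(r-1,H_n)$, the Proposition is equivalent to the assertion that for \emph{every} $\Omega\in\mathbf S_n^0$ there is at least one $K\subset H_n$ with $\dim K=r-1$ and $K\otimes V$ symplectic; openness then upgrades it to the statement for a general decomposition.

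\smallskip
\emph{Construction of a good subspace.} Build $K$ by enlarging a flag $0=K_0\subset K_1\subset\dots\subset K_{r-1}$, keeping $K_j\otimes V$ symplectic at each stage. Suppose $K_j\otimes V$ is symplectic; let $p$ be the $\Omega$-orthogonal projection of $H_n\otimes V$ onto $(K_j\otimes V)^{\perp_\Omega}$, and for $e\in H_n\setminus K_j$ set $\psi_e\colon V\to(K_j\otimes V)^{\perp_\Omega}$, $v\mapsto p(e\otimes v)$. One checks that $\psi_e$ is injective and that
\[
\langle\psi_e(v),\psi_e(v')\rangle_\Omega=\bar\Omega(e,e)(v,v')-\langle(1-p)(e\otimes v),(1-p)(e\otimes v')\rangle_\Omega,
\]
and that $K_{j+1}:=K_j+\Bbbk e$ is admissible exactly when the skew form on $V$ on the right-hand side is nondegenerate. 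Since that form depends quadratically on $e$, its Pfaffian is a degree-$4$ homogeneous function $\Phi_j$ on $H_n$ which vanishes on $K_j$; what remains is to show $\Phi_j\not\equiv 0$, i.e.\ that the family of tensor subspaces $\{(K_j+\Bbbk e)\otimes V\}_e$ does not lie entirely in the degeneracy locus. Here one feeds in the nondegeneracy of $\Omega$ on all of $H_n\otimes V$ together with the hypothesis $r\le n-1$: the latter forces $K_j$ to have codimension $\ge 2$ at every relevant stage, so that $e$ varies in a projective space of dimension $\ge 2$, and a family that were degenerate for all $e$ would in the limit produce a nonzero vector in $\ker\Omega$, a contradiction.

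\smallskip
\emph{Where the work is.} The only non-formal ingredient is the last one, the non-vanishing of $\Phi_j$ (equivalently: existence of one symplectic tensor subspace of each intermediate size). It is genuinely false without the inequality $r\le n-1$: for $r=n$ it would require some single line $\ell\subset H_n$ to have $\langle\,,\,\rangle_\Omega$ nondegenerate on $\ell\otimes V$, and already for $n=2$ one can choose an invertible $\Omega$ for which no line works (let the three ``blocks'' $\bar\Omega(h_i,h_j)$ be the coefficients of a parametrized conic lying on the Klein quadric $Gr(2,V^\vee)\subset\PP(\wedge^2V^\vee)$). So the argument must really use $r\le n-1$; the cleanest implementation is to run the induction while simultaneously keeping $\Omega|_{K_j\otimes V}$ general among nondegenerate tensor-type forms of its size, treating the two smallest sizes by a direct Klein-quadric computation.
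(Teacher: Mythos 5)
Your reduction steps are fine as linear algebra, but the proof breaks at two places, one structural and one at the decisive step. Structurally, the opening claim that $\Omega:=B^{-1}$ lies in $\mathbf{S}_n^0$, i.e.\ is again of ``tensor type'', is false for $n\ge2$: the inverse of an invertible element of $\mathbf{S}_n^\vee=S^2H_n\otimes\wedge^2V$ is only anti-self-dual, i.e.\ lies in $\wedge^2(H_n^\vee\otimes V^\vee)=\mathbf{S}_n\oplus\bigl(\wedge^2H_n^\vee\otimes S^2V^\vee\bigr)$, and in general has a nonzero component in the second summand. For instance, for $n=2$ take the blocks $\omega_{11}=v_1\wedge v_2+v_3\wedge v_4$, $\omega_{12}=v_1\wedge v_2$, $\omega_{22}=v_1\wedge v_4+v_2\wedge v_3$ (in some basis $v_1,\dots,v_4$ of $V$): the resulting $B$ is invertible, but the off-diagonal block $-\omega_{11}^{-1}\omega_{12}S^{-1}$ of $B^{-1}$ (with $S$ the Schur complement) is not a skew $4\times4$ matrix, so $B^{-1}\notin\mathbf{S}_2$. (The paper's phrase about ``the isomorphism $\mathbf{S}_n^0\to(\mathbf{S}_n^\vee)^0$, $B\mapsto B^{-1}$'' may have suggested otherwise, but the block shape \eqref{matrix of D} used in the Proposition forces $B$ itself to be the tensor-type object, and then its inverse is not.) Your identification of $\Ker B_1$ with $\Omega(W\cap W^{\perp_\Omega})$ and the openness reduction on the Grassmannian only use that $\Omega$ is a nondegenerate skew form and are correct; but the whole ``construction of a good subspace'' is run with the symmetric map $\bar\Omega\colon H_n\times H_n\to\wedge^2V^\vee$ (in $\psi_e$, in the displayed identity, in the quartic $\Phi_j$), and no such $\bar\Omega$ exists for the actual $\Omega=B^{-1}$. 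The construction would have to be transposed to the $B$-side, where the tensor structure genuinely lives and the subspace to be produced sits in $H_n^\vee$ and has dimension $n-r+1\ge2$.

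Even after such a transposition the argument is incomplete, because the one non-formal point --- $\Phi_j\not\equiv0$, i.e.\ the existence of a nondegenerate tensor subspace at each stage --- is exactly what you do not prove. The offered mechanism (``a family degenerate for all $e$ would in the limit produce a nonzero vector in $\ker\Omega$'') is not an argument, and your own Klein-quadric example refutes it at the base of the flag: with $\omega_{11}=v_1\wedge v_2$, $2\omega_{12}=v_1\wedge v_4-v_2\wedge v_3$, $\omega_{22}=v_3\wedge v_4$ (the ruling of a smooth quadric) one gets an invertible tensor-type form for which every line $\ell$ has $\ell\otimes V$ degenerate. Nothing in the sketch explains why codimension $\ge2$, or the hypothesis $r\le n-1$, repairs this, and ``keeping $\Omega|_{K_j\otimes V}$ general among nondegenerate forms'' is not available, since the Proposition is asserted for \emph{every} $B\in(\mathbf{S}_n^\vee)^0$; the concluding sentence explicitly defers the needed small-size computations. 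By contrast, the paper's proof is a two-line reduction: it invokes Proposition 7.3 of \cite{T1} --- precisely the statement that one can split off one copy of $H_1$ at a time while keeping the large tensor-type block nondegenerate --- applies it iteratively to reach $H_{n-r+1}$, and then uses openness of nondegeneracy on the space of decompositions to pass from one good decomposition to a general one. What you set out to prove from scratch is the content of that cited result, and the proposal leaves it unestablished.
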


\begin{proof} By applying Proposition 7.3] in \cite{T1} $r$ times, one obtains a
decomposition
$H_n\overset{\sim}\to H_{n-r+1}\oplus H_{r-1}$
such that
$B_1:H_{n-r+1}^\vee\otimes V^\vee\to H_{n-r+1}\otimes V$ in
\eqref{matrix of D}
is nondegenerate, that is, $B_1\in(\mathbf{S}^\vee_{n-r+1})^0$.
\end{proof}

If $\mathcal{X}$ is any irreducible component of $X_{n,r}$,
taken with its   reduced structure, and
$\overline{\mathcal{X}}$  is its closure in
$\overline{X}_{n,r}$, we pick up  a point
$z=(B_1,\phi,\psi,\lambda,\mu,D)\in \mathcal{X}$
not lying in the components of $X_{n,r}$ different from
$\mathcal{X}$, and such that the decomposition
$H_n\simeq H_{n-r+1}\oplus H_{r-1}$ is general.
Then, by Proposition \ref{nondeg for general}, $B_1\in(\mathbf{S}^\vee_{n-r+1})^0$.
Consider the morphism
$$
f:\ \mathbb{A}^1\to\overline{\mathcal{X}},\
t\mapsto(B_1,t^2\phi,t\psi,t\lambda,t^2\mu,t^4D),\ \ \ f(1)=z.
$$
This  is well defined as a consequence of  \eqref{CDC}. The point
$f(0)=(B_1,0,0,0,0,0)$ lies in the fibre $p^{-1}(0,0,0)$, so that
$p^{-1}(0,0,0)\cap\overline{\mathcal{X}}\ne\emptyset$.
In different terms,
\begin{equation}\label{nonempty fibre}
\rho^{-1}(0,0,0)\ne\emptyset,\ \ \ \ \textrm{where}\ \ \
\rho:=p|\overline{\mathcal{X}}.
\end{equation}
By \eqref{CDC} and the definition of
$\widetilde{X}_{n,r}$, one has
\begin{equation}\label{zero fibre}
\tilde{p}^{-1}(0,0,0)=\{(B_1,\phi,\psi)\in
(\mathbf{S}^\vee_{n-r+1})^0\times\mathbf{\Phi}_{n-r+1}\times
\mathbf{\Psi}_{n,r}\ |\
\phi^\vee\circ B_1\circ\phi\in\mathbf{S}_{n-r+1}\}.
\end{equation}

Now for each $i\ge1$ consider the set $Z_i$ mentioned in the introduction. This set $Z_i$ is defined in
\cite[Section 7]{T1} as
\begin{equation}\label{zi}
Z_i=\{(B,\phi)\in(\mathbf{S}^\vee_i)^0\times\mathbf{\Phi}_i\ |\
\phi^\vee\circ B\circ\phi\in\mathbf{S}_i\},
\end{equation}
and   has a natural structure of   closed subscheme of
$(\mathbf{S}^\vee_i)^0\times\mathbf{\Phi}_i$
The key point in the sequel is the fact that $Z_i$ is an integral scheme of dimension 
$4i(i+2)$---see \cite[Theorem 7.2]{T1}. This statement is based on the following relation between $Z_i$ for $i\ge2$ and the moduli space of 't Hooft instantons of charge $2i-1$. Fix a monomorphism
$j:H_{i-1}\hookrightarrow H_i$. For an arbitrary point $z=(B,\phi)\in Z_i$, let $E_{2i}$ be a symplectic
vector bundle of rank $2i$ defined as a cokernel of a morphism of sheaves
$\tilde{B}:H_i\otimes\mathcal{O}_{\mathbb{P}^3}(-1)\to H_i^{\vee}\otimes\Omega_{\mathbb{P}^3}(1)$
naturally induced by $B$. Let $s(z):H_i\to H^0(E_{2i}(1))$ be the composition of $\phi$ understood as
a homomorphism $H_i\to H_i^{\vee}\otimes\wedge^2V^{\vee}$
and of the evaluation map $H_i^{\vee}\otimes\wedge^2V^{\vee}\to H^0(E_{2i}(1))$,
and let $s_z$ be the composition
$s_z:H_i\otimes\mathcal{O}_{\mathbb{P}^3}(-1)\overset{s(z)}{\to}
H^0(E_{2i}(1))\otimes\mathcal{O}_{\mathbb{P}^3}(-1)\overset{ev}{\to}E_{2i}$,
where $ev$ is the evaluation morphism. Using the symplecticity of $E_{2i}$, one obtains an antiselfdual monad
$M(z):0\to H_{i-1}\otimes\mathcal{O}_{\mathbb{P}^3}(-1)\overset{s_z\circ j}{\to}E_{2i}\overset{{}^t(s_z\circ j)}{\to}
H_{i-1}^{\vee}\otimes\mathcal{O}_{\mathbb{P}^3}(1)\to0$ with a rank-2 cohomology vector bundle $E_2(z)$ with
$c_1=0$ and $c_2=2i-1$. A standard diagram chase yields a monomorphism
$H_i/j(H_{i-1})\otimes\mathcal{O}_{\mathbb{P}^3}(-1)\to E_2(z)$
showing that $h^0(E_2(z)(1))\ne0$, i.~e. that $E_2(z)$ is a 't Hooft instanton vector bundle. Thus the association
$z\rightsquigarrow M(z)$ yields a morphism of $Z_i$ to the space $M^{tH}_{2i-1}$ of the 't Hooft monads, which is
irreducible since the moduli space of 't Hooft instantons of charge $2i-1$ is known to be irreducible. It is
shown in \cite[Section 9]{T1} that this morphism $Z_i\to M^{tH}_{2i-1}$ is a composition of a dense open embedding and
the structure map of an affine bundle over  $M^{tH}_{2i-1}$. This implies the irreducibility of $Z_i$.

Now, comparing (\ref{zi}) for $i=n-r+1$ with (\ref{zero fibre}), 
we obtain scheme-theoretic inclusions
\begin{equation}\label{zero fibre subset Zm times Psim}
\rho^{-1}(0,0,0)\subset p^{-1}(0,0,0)\subset
\tilde{p}^{-1}(0,0,0)=Z_{n-r+1}\times\mathbf{\Psi}_{n,r}.
\end{equation} 
By the above, $Z_{n-r+1}$ is an integral scheme of
dimension $4(n-r+1)(n-r+3)$. This together with
\eqref{zero fibre subset Zm times Psim} implies that
\begin{equation}\label{dim fibre le dim Zm+dim Psim}
\dim\rho^{-1}(0,0,0)\le\dim p^{-1}(0,0,0)\le\dim Z_{n-r+1}+
\dim\mathbf{\Psi}_{n,r}=4(n-r+1)(n-r+3)
\end{equation}
$$
+6(r-1)(n-r+1)=(n-r+1)(4n+2r+6).
$$
Hence, in view of \eqref{nonempty fibre},
\begin{multline}\label{dim X le}
\dim \overline{\mathcal{X}}\le\dim\rho^{-1}(0,0,0)+
\dim\mathbf{L}_{n,r}+\dim\mathbf{M}_{r-1}+\dim KG \\ \le
(n-r+1)(4n+2r+6)
+6(r-1)(n-r+1)+3(r-1)r+(8n-8r+5) \\ =(2n-r+1)^2+4(2n-r+1)(r+1)
-r(2r+1).
\end{multline}
On the other hand, formula \eqref{dim MInr ge...}---with $n$ replaced by $2n-r+1$---and
Proposition \ref{isomorphism fnr} show that, for any point $x\in\mathcal{X}$ such that
$A:=f_{n,r}^{-1}(x)\in MI_{2n-r+1,r}^*(\xi)$,
\begin{equation}\label{dim X ge}
(2n-r+1)^2+4(2n-r+1)(r+1)-r(2r+1)\le\dim_A MI_{2n-r+1,r}^*(\xi)=
\dim\overline{\mathcal{X}}.
\end{equation}
Comparing \eqref{dim X le} with \eqref{dim X ge}, we see that
all the inequalities in
\eqref{dim fibre le dim Zm+dim Psim}--\eqref{dim X ge}
are equalities. In particular,
\begin{equation}\label{dim fibre=dim X'-dim base}
\dim\rho^{-1}(0,0)=\dim(Z_{n-r+1}\times\mathbf{\Psi}_{n,r})=
\dim\overline{\mathcal{X}}-\dim(\mathbf{L}_{n,r}
\times\mathbf{M}_{r-1}\times KG).
\end{equation}
Since, by Theorem \cite[Theorem 7.2]{T1}, the scheme $Z_{n-r+1}$
is integral and so $Z_{n-r+1}\times\mathbf{\Psi}_{n,r}$ is
integral as well, \eqref{zero fibre subset Zm times Psim} and
\eqref{dim fibre=dim X'-dim base} yield the coincidence
of the integral schemes
\begin{equation}\label{zero fibre = Zm times Psim}
\rho^{-1}(0,0,0)=p^{-1}(0,0,0)=
\tilde{p}^{-1}(0,0,0)=Z_{n-r+1}\times\mathbf{\Psi}_{n,r}.
\end{equation}

We need now the following easy Lemma, which is a slight
generalization of Lemma 7.4 from \cite{T1}.  \begin{lemma}\label{flat implies irred}
Let $f:X\to Y$ be a morphism of reduced schemes, with $Y$   an
integral scheme. Assume that there exists a closed point
$y\in Y$ such that, for any irreducible component $X'$ of $X$,

(a) $\dim f^{-1}(y)=\dim X'-\dim Y$,

(b) the scheme-theoretic inclusion of fibres
$(f|_{X'})^{-1}(y)\subset f^{-1}(y)$
is an isomorphism of integral schemes.\\
Then

(i) there exists an open subset $U$ of $Y$ containing $y$ such
that the morphism $f|_{f^{-1}(U)}:f^{-1}(U)\to U$ is flat, and

(ii) $X$ is integral.
\end{lemma}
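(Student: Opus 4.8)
The plan is to prove the two assertions (i) flatness near $y$ and (ii) integrality of $X$ by a dimension-counting argument over the irreducible components of $X$, using hypothesis (a) to pin down the dimension of each component and hypothesis (b) to force the components to agree along the special fibre.

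First I would argue (i). Fix an irreducible component $X'$ of $X$, and let $f' = f|_{X'} : X' \to Y$. Since $Y$ is integral, the image $\overline{f'(X')}$ is an integral closed subscheme of $Y$. By upper semicontinuity of fibre dimension, for every $x \in X'$ the fibre $(f')^{-1}(f'(x))$ has dimension at least $\dim X' - \dim \overline{f'(X')}$; on the other hand, the generic fibre has dimension exactly $\dim X' - \dim \overline{f'(X')}$ when $f'$ is dominant onto its image, and in general $\dim (f')^{-1}(y') \ge \dim X' - \dim Y$ whenever $y'$ lies in the image. Combined with hypothesis (a), which says $\dim (f')^{-1}(y) = \dim X' - \dim Y$, and with (b), which says $(f')^{-1}(y) = f^{-1}(y)$ as schemes, this already shows that $f'$ is dominant onto $Y$ (otherwise the fibre over $y$, if nonempty, would be too big, and if $y \notin \overline{f'(X')}$ then $(f')^{-1}(y) = \emptyset$ contradicting (b) since $f^{-1}(y) \ne \emptyset$). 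So every component $X'$ dominates $Y$. Now apply the standard criterion: a morphism from a reduced scheme to an integral (hence in particular integral, normal enough for the dimension version, or simply: regular in codimension) scheme which has equidimensional fibres of the expected dimension over a point is flat in a neighbourhood of that point. Concretely, ``miracle flatness'' / the dimension criterion for flatness (EGA IV, 6.1.5, or Matsumura) gives: since $Y$ can be taken regular at $y$ after shrinking (it is integral of finite type over a field of characteristic $0$, hence generically smooth, but we need regularity at the specific point $y$ — this is the first subtle point), and since each fibre $(f|_{X'})^{-1}(y)$ has the expected dimension $\dim X' - \dim Y$, the morphism $f'$ is flat at every point of $(f')^{-1}(y)$; taking the union over the finitely many components $X'$ and using that $X$ is reduced, one gets an open $U \ni y$ with $f^{-1}(U) \to U$ flat.

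Next (ii). Suppose $X$ had at least two components $X_1, X_2$. By (b), $(f|_{X_1})^{-1}(y) = f^{-1}(y) = (f|_{X_2})^{-1}(y)$ as schemes, and this common fibre is integral. In particular the generic point $\eta$ of $f^{-1}(y)$ lies in both $X_1$ and $X_2$. But by part (i), after shrinking to $U$, the morphism $f^{-1}(U) \to U$ is flat, hence (being flat with integral source-components and integral target) equidimensional, and the local rings of $f^{-1}(U)$ at points of the fibre $f^{-1}(y)$ are flat $\mathcal{O}_{Y,y}$-algebras with fibre the local ring of the integral scheme $f^{-1}(y)$; flatness plus the fibre being a domain, together with $\mathcal{O}_{Y,y}$ being a domain, forces $\mathcal{O}_{X, \eta}$ to be a domain (a flat local extension of a domain whose closed fibre is a domain is a domain), so $X$ is irreducible at $\eta$, contradicting $\eta \in X_1 \cap X_2$ with $X_1 \ne X_2$. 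Hence $X$ has a single component; being reduced by hypothesis, $X$ is integral.

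I expect the main obstacle to be the regularity requirement on $Y$ at the point $y$, which is exactly what ``miracle flatness'' needs: the dimension criterion for flatness requires the base to be regular (or at least Cohen–Macaulay) at the point in question, while the hypothesis only gives $Y$ integral. In the application in this paper $Y = \mathbf{L}_{n,r} \times \mathbf{M}_{r-1} \times KG$, where the first two factors are affine spaces (regular everywhere) and $KG$ is the affine cone over a Grassmannian — which is \emph{not} regular at the vertex $0$, and $y = (0,0,0)$ is precisely that point. So the clean ``miracle flatness'' argument does not apply verbatim, and the honest route is the one actually taken: one does \emph{not} prove flatness first and deduce integrality, but rather one runs the dimension bookkeeping directly — each component $\overline{\mathcal{X}}$ satisfies $\dim \overline{\mathcal{X}} = \dim Y + \dim(Z_{n-r+1} \times \mathbf{\Psi}_{n,r})$ by (a), the special fibre $\rho^{-1}(y)$ is the \emph{integral} scheme $Z_{n-r+1} \times \mathbf{\Psi}_{n,r}$ by (b), and one concludes irreducibility by noting that all components share the dense-in-the-fibre generic point of $Z_{n-r+1} \times \mathbf{\Psi}_{n,r}$ (which, by the dimension equality, must be a generic point of some component and lies in the closure of the part of each component dominating $Y$). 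Flatness of $f^{-1}(U) \to U$ then follows \emph{a posteriori} from $X$ integral, $Y$ reduced with $Z_{n-r+1}\times\mathbf{\Psi}_{n,r}$ integral of the right dimension, via the fibre-dimension criterion applied over the regular locus of $Y$ (an open set still meeting — in fact containing a punctured neighbourhood of — $y$, which suffices after adjusting $U$), or simply by the original Lemma 7.4 of \cite{T1} that this generalizes.
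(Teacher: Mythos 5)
Your argument for (ii) hinges on the claim that ``a flat local extension of a domain whose closed fibre is a domain is a domain'', and this claim is false, so the deduction does not go through. For a counterexample, let $A=\mathcal{O}_{C,c}$ be the local ring of an irreducible nodal cubic $C$ at its node $c$ (a domain); its completion $\widehat{A}\cong \Bbbk[[u,v]]/(uv)$ is flat over $A$, local, with closed fibre the residue field, yet is not a domain. If you insist on staying in the finite-type world, take the connected \'etale double cover $X_0\to C$ (a cycle of two rational curves) and a point $q$ over $c$: then $\mathcal{O}_{X_0,q}$ is flat (even \'etale) over the domain $A$, its closed fibre is $\kappa(q)$, a field, and yet two irreducible components of $X_0$ pass through $q$. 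This shows that flatness at the generic point $\eta$ of $f^{-1}(y)$, integrality of $\mathcal{O}_{f^{-1}(y),\eta}$ and integrality of $\mathcal{O}_{Y,y}$ do \emph{not} preclude several components of $X$ through $\eta$; in the example, hypothesis (b) fails only because the full fibre over $c$ consists of two points. In other words, the strength of (b) is global---the scheme-theoretic fibre of \emph{every} component equals the whole fibre, and that whole fibre is irreducible---and any correct proof must exploit this globally; it cannot be recovered after localizing at $\eta$, which is exactly what your argument does.

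Part (i) is also not established. Miracle flatness requires the base to be regular and the source Cohen--Macaulay at the relevant points; the lemma assumes only that $Y$ is integral and $X$ reduced, and in the application $y=(0,0,0)$ is precisely the vertex of the cone $KG$, so $Y$ is singular at $y$, while no Cohen--Macaulayness of $X_{n,r}$ is available. You flag the first defect but never repair it, and you do not address the second. The closing paragraph does not rescue the proof: restricting to the regular locus of $Y$ cannot yield flatness over a neighbourhood of $y$ (which is what (i) asserts), the parenthetical claim that the generic point of the fibre ``must be a generic point of some component'' is false (its dimension is $\dim X'-\dim Y<\dim X'$), and distinct components sharing a point does not imply irreducibility. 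Note finally that the paper itself gives no proof of this lemma---it refers to Lemma 7.4 of \cite{T1}---so your write-up has to stand on its own, and as it stands both (i) and (ii) remain unproven.
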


By applying   this lemma to
$X=X_{n,r},\ X'=\mathcal{X},\ Y=\mathbf{L}_{n,r}\times
\mathbf{M}_{r-1}\times KG,\ y=(0,0),f=p$, also in view of
\eqref{dim fibre=dim X'-dim base} and
\eqref{zero fibre = Zm times Psim},
one obtains that $X_{n,r}$ is integral and is of dimension
$$(2n-r+1)^2+4(2n-r+1)(r+1)-r(2r+1).$$
Theorem \ref{Irred of Xnr} is thus proved.

\bigskip

\frenchspacing

\end{document}